\theoremstyle{plain}
\newtheorem{thm}{Theorem}[section]
\newtheorem*{thm*}{Theorem}
\newtheorem*{conj*}{Conjecture}
\newtheorem{lem}{Lemma}[section]
\newtheorem{cor}{Corollary}[section]
\theoremstyle{definition}
\newtheorem{dfn}{Definition}[section]
\theoremstyle{remark}
\newtheorem{rem}{Remark}[section]
\numberwithin{equation}{section}
\newtheorem*{ackn*}{Acknowledgements}
\newcommand\blfootnote[1]{%
  \begingroup
  \renewcommand\thefootnote{}\footnote{#1}%
  \addtocounter{footnote}{-1}%
  \endgroup
}
\newcommand{\intprod}{\mathbin{\mathpalette\dointprod\relax}}
\newcommand{\dointprod}[2]{%
  \raisebox{\depth}{\scalebox{1}[-1]{$#1\lnot$}}}
\title{Free boundary Hamiltonian stationary Lagrangian discs in $\mathbb{C}^2$}
\author{Filippo Gaia}
\begin{document}
\date{\today}

\begin{abstract}
Let \(\Omega \subset \mathbb{C}^2\) be a smooth domain. We establish conditions under which a weakly conformal, branched \(\Omega\)-free boundary Hamiltonian stationary Lagrangian immersion $u$ of a disc in $\mathbb{C}^2$ is a \(\Omega\)-free boundary minimal immersion. We deduce that if \(u\) is a weakly conformal, branched \(B_1(0)\)-free boundary Hamiltonian stationary Lagrangian immersion of a disc with Legendrian boundary data, then \(u(D^2)\) must be a Lagrangian equatorial plane disc. We also present examples of \(\Omega\)-free boundary Hamiltonain stationary discs, demonstrating the optimality of our assumptions.
\end{abstract}
\maketitle
\blfootnote{\textup{2020} \textit{Mathematics Subject Classification}: \textup{53D12, 53C42, 53C24, 58E12}}
\section{Introduction}
In this work, we study Lagrangian surfaces in $\mathbb{C}^2$ that are stationary points of the area functional with respect to Hamiltonian variations preserving the boundary of the surface within the boundary of a smooth open domain \(\Omega\) in \(\mathbb{C}^2\). Our main result is the following.
\begin{thm}\label{thm: HSLFB-implies-minimal}
    Let $\Omega\subset \mathbb{C}^2$ be an open subset with smooth boundary. 
    Let $u\in C^1\cap W^{2,1}(\overline{D}^2, \mathbb{C}^2)$ be a weakly conformal, Lagrangian immersion away from finitely many  branch points in $D^2$, with continuous Lagrangian angle $\overline{g}$.
    Assume that $u$ is $\Omega$-free boundary Hamiltonian stationary with the localization property. Moreover assume that
    \begin{align}\label{eq: intro-Legendrian-condition}
        \partial_\tau u\cdot I(N\circ u)=0\text{ on }\partial D^2,
    \end{align}
    where $I$ denotes the complex multiplication in $\mathbb{C}^2$ and $N$ denotes the outer normal vector of $\Omega$.
    Then $u$ is a calibrated, branched $\Omega$-free boundary minimal immersion. 
\end{thm}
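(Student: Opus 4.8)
\emph{Proof strategy.} The plan is to establish separately the two defining properties of a calibrated free boundary minimal disc: that the Lagrangian angle $\overline{g}$ is constant on $D^{2}$, and that $u$ meets $\partial\Omega$ orthogonally, i.e. $N\circ u=\pm\eta$ on $\partial D^{2}$, where $\eta$ denotes the outer unit conormal of $u$ along $\partial D^{2}$. If $\overline{g}\equiv\beta_{0}$, then $\vec{H}$, being pointwise proportional to $I\nabla_{g}\overline{g}$, vanishes, and since $u^{*}(dz_{1}\wedge dz_{2})=e^{i\overline{g}}\,dA_{g}$ we get $u^{*}\bigl(\mathrm{Re}(e^{-i\beta_{0}}dz_{1}\wedge dz_{2})\bigr)=dA_{g}$, so $u$ is a branched minimal immersion calibrated by a special Lagrangian calibration; adding orthogonality makes it an $\Omega$-free boundary minimal immersion. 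The tool throughout is the first variation of area under Hamiltonian variations: for a Hamiltonian field $Y=I\nabla f$ with potential $f$ and $\phi:=f\circ u$, the normal component of $Y$ along $u$ is $I\nabla_{g}\phi$ and the mean curvature one-form of $u$ equals $d\overline{g}$, whence
\[
\delta A(Y)\;=\;\int_{D^{2}}\langle\nabla_{g}\overline{g},\nabla_{g}\phi\rangle\,dA_{g}\;+\;\int_{\partial D^{2}}\langle\eta,Y\rangle\,ds .
\]
Weak conformality is used decisively: $\langle\nabla_{g}\overline{g},\nabla_{g}\phi\rangle\,dA_{g}=\langle\nabla_{0}\overline{g},\nabla_{0}\phi\rangle\,dx\,dy$ and $\Delta_{g}=\lambda^{-2}\Delta_{0}$, so the Euler--Lagrange analysis is governed by the flat Laplacian on $D^{2}$; the localization property, together with the $C^{1}\cap W^{2,1}$ regularity and the continuity of $\overline{g}$, is what makes this identity and its use with test potentials supported in small balls legitimate despite the branch points.

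\emph{Euler--Lagrange equations.} Testing with $f$ supported away from $\partial D^{2}$ and from the finitely many branch points yields $\Delta_{0}\overline{g}=0$ in $D^{2}$, so by Weyl's lemma $\overline{g}$ is smooth and harmonic in $D^{2}$ and continuous on $\overline{D^{2}}$. For the boundary condition, introduce along $u(\partial D^{2})$ (off boundary branch points) the adapted orthonormal frame $\{\partial_{\tau}u,\eta,I\partial_{\tau}u,I\eta\}$ of $\mathbb{C}^{2}$. Since $u(\partial D^{2})\subset\partial\Omega$ we have $\langle N\circ u,\partial_{\tau}u\rangle=0$, so $I(N\circ u)$ has vanishing $I\partial_{\tau}u$-component, while the Legendrian hypothesis~\eqref{eq: intro-Legendrian-condition} states precisely that it has vanishing $\partial_{\tau}u$-component; hence $I(N\circ u)\in\mathrm{span}(\eta,I\eta)$ along $\partial D^{2}$. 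The admissibility (free boundary) constraint on $Y$ is $\langle Y,N\circ u\rangle=0$, i.e. $\langle\nabla f,I(N\circ u)\rangle=0$ on $\partial D^{2}$; using the frame decomposition and the above, one can select test potentials with $\phi|_{\partial D^{2}}$ arbitrary, $\partial_{\nu}\phi|_{\partial D^{2}}=0$, and the transverse derivatives of $f$ arranged so that $Y$ is admissible and $\langle\eta,Y\rangle\equiv0$ on $\partial D^{2}$. With $\Delta_{0}\overline{g}=0$ already in hand, $\delta A(Y)=0$ then reads $\int_{\partial D^{2}}(\partial_{\nu}\overline{g})\,\phi\,ds=0$ for all such $\phi$, i.e. $\partial_{\nu}\overline{g}=0$ on $\partial D^{2}$ in the weak sense.

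\emph{Constancy, then orthogonality.} A function harmonic in $D^{2}$, continuous on $\overline{D^{2}}$, with vanishing weak normal derivative on $\partial D^{2}$, extends by even Schwarz reflection ($z\mapsto 1/\overline{z}$) to a bounded harmonic function on $S^{2}$ and is therefore constant; thus $\overline{g}\equiv\beta_{0}$, $\vec{H}\equiv0$, and $u$ is a branched minimal immersion calibrated by $\mathrm{Re}(e^{-i\beta_{0}}dz_{1}\wedge dz_{2})$. Now, with $\vec{H}\equiv0$ the first variation reduces to $\delta A(Y)=\int_{\partial D^{2}}\langle\eta,Y\rangle\,ds$, which must vanish for every admissible Hamiltonian $Y$. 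Decomposing $\nabla f$ along the adapted frame once more: the Legendrian condition removes the $\partial_{\tau}u$-term from the constraint $\langle\nabla f,I(N\circ u)\rangle=0$, which then couples only the $I\eta$-component of $\nabla f$ on $\partial D^{2}$ to the now-unconstrained quantity $\partial_{\nu}\phi$, whereas $\langle\eta,Y\rangle$ equals, up to sign, exactly that $I\eta$-component. Hence $\delta A(Y)=0$ for every admissible $Y$ is possible only if the $\eta$-component of $I(N\circ u)$ vanishes identically on $\partial D^{2}$; together with the absence of the $\partial_{\tau}u$- and $I\partial_{\tau}u$-components this forces $I(N\circ u)\parallel I\eta$, i.e. $N\circ u=\pm\eta$. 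Therefore $u$ meets $\partial\Omega$ orthogonally and is a calibrated, branched $\Omega$-free boundary minimal immersion.

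\emph{Expected main obstacle.} The conceptual outline above is short; the substance is analytic. One must make rigorous sense of, and compute, the first variation of area under Hamiltonian variations for a merely $C^{1}\cap W^{2,1}$, weakly conformal, branched Lagrangian carrying only a continuous Lagrangian angle, and produce a rich enough supply of admissible test Hamiltonians localized near the interior branch points and near $\partial D^{2}$ --- this is precisely the role of the localization hypothesis. The delicate points are: controlling the degeneration of the adapted frame at boundary branch points (where $\partial_{\tau}u$ vanishes) and the possible non-embeddedness of $u(\partial D^{2})$ inside $\partial\Omega$; justifying the integration by parts leading to the Euler--Lagrange form across the branch points; and carrying just enough boundary regularity of $\overline{g}$ to run the reflection/maximum-principle step.
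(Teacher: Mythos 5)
Your skeleton (first variation under Hamiltonian fields $Y=I\nabla f$, interior harmonicity of the Lagrangian angle, a Neumann condition for the angle on $\partial D^2$, constancy, orthogonality to $\partial\Omega$, calibration) coincides with the paper's overall strategy, and parts of it match the paper's proof closely: the variational identity you write is exactly what the paper extracts from the localization property together with $\operatorname{div}(g\nabla u)=0$ (this is where $u\in W^{2,1}$ enters), and your orthogonality step is in essence the paper's, since testing with $f$ vanishing on $\partial\Omega$ and with $\nabla f=\varphi N$ there is admissible, requires no prescription of data along the boundary curve, and isolates $\langle\partial_\nu u, I(N\circ u)\rangle$; the Legendrian hypothesis then upgrades this to $\partial_\nu u\wedge(N\circ u)=0$ via the frame $\{N,IN,\partial_\tau u,I\partial_\tau u\}$. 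The genuine gap is precisely the step you defer as the ``expected main obstacle'': producing admissible test Hamiltonians whose potential has an \emph{arbitrary} prescribed tangential derivative $\partial_\tau(f\circ u)$ along $u(\partial D^2)$ while the $N$- and $IN$-components of $\nabla f$ vanish along the curve. This asks for a smooth ambient $f$ whose restriction to a merely $C^1$, possibly non-embedded curve has an exactly prescribed derivative; the paper states explicitly that this naive prescription fails at the regularity $C^1\cap W^{2,1}$, and the heart of its proof is the substitute construction: flow the hyperplane orthogonal to $\partial_\tau u(x)$ along $g_0J N$, use slice functions $\tilde\beta\circ\Psi^{-1}$ (which satisfy $I\nabla f\cdot N=0$ near $u(\sigma)$ automatically), and absorb the distortion $\partial_\tau(f\circ u)=\alpha\cdot(\text{bi-Lipschitz factor})$ by a change of variables, concluding that the harmonic conjugate $G_\omega$ of the angle is locally constant on $\partial D^2$. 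Together with the distributional definition of $i\overline g\partial_\nu g$ through $G_\omega$, this is what makes ``$\partial_\nu\overline g=0$ in the weak sense'' both meaningful and true; without this device (or an equivalent one) your central step is unproved.

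A second, related point: your ordering makes the hard step harder. The paper proves orthogonality \emph{first} (no boundary prescription needed there) and then uses $\partial_\nu u\wedge(N\circ u)=0$ to annihilate the boundary term $\langle\partial_\nu u, I\nabla f\circ u\rangle$ for its slice test functions; in your ordering the Neumann condition comes before orthogonality, so you must simultaneously enforce $\langle\nabla f,N\rangle=0$ and $\langle\nabla f,IN\rangle=0$ along the curve (pointwise admissible thanks to the Legendrian condition, since $\mathrm{span}(\eta,I\eta)=\mathrm{span}(N,IN)$ there) \emph{and} prescribe $\partial_\tau(f\circ u)$ exactly, which is again the problematic prescription. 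Adopting the paper's order, or importing its flow construction, would close the gap; the remaining items --- removability of the branch points for the harmonic lift $\beta$, the local-diffeomorphism argument that generates arbitrary interior test functions of the form $f\circ u$, and the constancy-plus-calibration endgame --- are handled as in the paper's Lemma \ref{lem: properties-of-g} and Corollary \ref{cor: lag-min-implies-min}, and your reflection argument for constancy is an acceptable substitute for the paper's direct Neumann argument.
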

In the special case $\Omega=B_1(0)$ we deduce the following rigidity result.
\begin{thm}\label{thm: main-thm}
    Let $u\in C^1\cap W^{2,1}(\overline{D}^2, \mathbb{C}^2)$ be a weakly conformal, Lagrangian immersion away from finitely many branch points in $D^2$, with continuous Lagrangian angle $\overline{g}$.
    Assume that $u$ is $B_1(0)$-free boundary Hamiltonian stationary with the localization property and that \eqref{eq: intro-Legendrian-condition} is satisfied. Then $u(D^2)$ is a flat equatorial Lagrangian disc (i.e. it is the intersection of $B_1(0)$ with a Lagrangian 2-plane passing through the origin).
\end{thm}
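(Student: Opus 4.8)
The plan is to deduce this from Theorem~\ref{thm: HSLFB-implies-minimal} together with the classical correspondence in $\mathbb{C}^2$ between special Lagrangian surfaces and holomorphic curves for a rotated compatible complex structure. First I would observe that for $\Omega=B_1(0)$ the outer normal is $N\circ u=u$ along $\partial D^2$, so the hypotheses of Theorem~\ref{thm: HSLFB-implies-minimal} are exactly those assumed here, and $u$ is a calibrated, branched $B_1(0)$-free boundary minimal immersion; in particular its Lagrangian angle is a constant $\theta_0$, and $u$ is weakly conformal and harmonic away from the finitely many, interior branch points. Since $u\in C^1\cap W^{2,1}(\overline{D}^2,\mathbb{C}^2)$ and isolated points are removable for harmonic functions, $u$ is harmonic on all of $D^2$; then $|u|^2$ is subharmonic, equals $1$ on $\partial D^2$ and is non-constant, so by the strong maximum principle $|u|<1$ in $D^2$, whence $u^{-1}(S^3)=\partial D^2$ and $u$ is a proper branched free boundary minimal immersion of $(D^2,\partial D^2)$ into $(\overline{B_1(0)},S^3)$, immersed near $\partial D^2$. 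Finally, having constant Lagrangian angle $\theta_0$, the surface $u(D^2)$ is calibrated by $\operatorname{Re}(e^{-i\theta_0}\,dz_1\wedge dz_2)=\omega_J$, the Kähler form of the complex structure $J=J_{\theta_0}$ in the twistor family of $(\mathbb{R}^4,\langle\cdot,\cdot\rangle)$ anticommuting with $I$; thus $u(D^2)$ is a $J$-holomorphic curve, and a direct computation shows every $J$-complex line is $\omega_I$-Lagrangian, consistent with $u$ being Lagrangian. Since a hyperkähler rotation preserves the metric, hence the ball, the sphere and the notion of orthogonal intersection, after a conformal reparametrisation we may regard $u\colon D^2\to(\mathbb{C}^2,J)$ as a proper $J$-holomorphic disc meeting $S^3$ orthogonally.

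Next I would exploit the interaction of $J$-holomorphicity with the free boundary condition along $\partial D^2$. There the tangent plane $T_uu(D^2)$ is a $J$-complex line containing $\partial_\tau u$, and orthogonality with $S^3$ forces the outward conormal $\partial_\nu u$ to be a multiple of the position vector $u$; combined with $\partial_\tau u\perp u$ (from $|u|\equiv 1$ on $\partial D^2$) this gives $u\parallel J\partial_\tau u$, i.e.\ $\partial_\tau u$ is pointwise parallel to $Ju$. Hence the boundary curve $\gamma=u|_{\partial D^2}$ is an integral curve of the $J$-Hopf vector field $p\mapsto Jp$ on $S^3$, whose integral curves are exactly the great circles $L'\cap S^3$ with $L'$ a $J$-complex line through the origin; therefore $\gamma(\partial D^2)=L'\cap S^3$ for one such $L'$.

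To conclude, decompose $\mathbb{C}^2=L'\oplus (L')^{\perp}$ into $J$-complex lines: the $(L')^{\perp}$-component of $u$ is a holomorphic $\mathbb{C}$-valued function on $D^2$ that vanishes on $\partial D^2$, hence vanishes identically, so $u(D^2)\subset L'$; by properness $u(D^2)=L'\cap B_1(0)$. Since $u(D^2)$ is then a planar disc through $0$ whose tangent plane $L'$ agrees with the Lagrangian tangent planes of $u$ at immersion points, $L'$ is Lagrangian, and $u(D^2)$ is a flat equatorial Lagrangian disc, as claimed.

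I expect the main obstacle to be the boundary step: one has to verify that the free boundary (orthogonality) condition really translates into the pointwise identity $\partial_\nu u=\lambda\,u$ on $\partial D^2$ in a usable sense — which should follow from $u\in C^1(\overline{D}^2)$, weak conformality, and the fact that all branch points are interior — and that the $J$-Hopf flow description of great circles is exactly what pins the boundary curve into a single $J$-complex line. By contrast the holomorphic-extension argument in the last paragraph is robust and insensitive to interior branching, since a holomorphic function vanishing on $\partial D^2$ vanishes regardless of where $u$ branches.
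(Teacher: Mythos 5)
Your proposal is correct and follows essentially the same route as the paper: reduce via Theorem \ref{thm: HSLFB-implies-minimal} (Corollary \ref{cor: lag-min-implies-min}) to a constant Lagrangian angle and the free boundary orthogonality $(N\circ u)\wedge\partial_\nu u=0$ --- which is exactly the pointwise identity $\partial_\nu u=\lambda u$ you flagged as the main worry, so it is already supplied by Theorem \ref{thm: properties of HSL finite sing} --- then show the boundary curve solves $\dot\gamma=\pm e^{-i\beta_0}J\gamma$ and is a great circle in a $2$-plane through the origin, and finally kill the components of $u$ orthogonal to that plane. The only (inessential) difference is in this last step, where you use the hyperk\"ahler rotation and vanishing of a $J$-holomorphic component, while the paper simply uses harmonicity of $u$ (from \eqref{eq: structural-equation} with constant angle) and the maximum principle.
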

For the definition of free boundary Hamiltonian stationary map with the localization property we refer to  Definition \ref{dfn: localization-property}. The two Theorems are proved in Section 3.\\
Theorem \ref{thm: HSLFB-implies-minimal} is optimal in the following sense:
if $\overline{g}$ is not assumed to be continuous, there are examples of conformal, $B_1(0)$-free boundary Hamiltonian stationary Lagrangian maps with the localization property in $C^1\cap W^{2,1}(\overline{D}^2, \mathbb{C}^2)$, which are immersions away from a single point but are not minimal. We will see in Section 4.1 that the Schoen-Wolfson cones constitute a family of such examples.
Additionally, in Section 4.2 we will discuss examples of smooth domains $\Omega$ and of smooth, conformal $\Omega$-free boundary Hamiltonian stationary Lagrangian immersions with the localization property not satisfying \eqref{eq: intro-Legendrian-condition} which are not minimal.\\
The proof of the two Theorems is based on the observation that if $u$ is a $\Omega$-free boundary Hamiltonian stationary Lagrangian immersion away from finitely many branch points and singular points, satisfying \eqref{eq: intro-Legendrian-condition}, then
\begin{align}\label{eq: eq-condition-thm-2}
    (N\circ u)\wedge\partial_\nu u=0\quad\text{ and }\quad
        i\overline{g}\partial_\nu g=0\text{ on }\partial D^2
    \end{align}
(see Theorem \ref{thm: properties of HSL finite sing}).\\
The free boundary problem for smooth Lagrangian surfaces whose boundary lies either in a minimal Lagrangian submanifold or in a complex hypersurface in an $2n$-dimensional Calabi-Yau manifold (for any $n$) has been studied by R. Schoen in \cite{Schoen-special-lagrangian}. In particular he showed that if $\Sigma$ is a free boundary Lagrangian stationary submanifold, then the conormal vector of $\Sigma$ along $\partial \Sigma$ is orthogonal to the constraint manifold, moreover in the first case its Lagrangian angle is constant on $\partial \Sigma$, while in the second case the normal derivative of the Lagrangian angle vanishes on $\partial \Sigma$. In both cases he deduced that $\Sigma$ is special Lagrangian and therefore free boundary minimal and calibrated.\\
Regarding Theorem \ref{thm: main-thm}, we first observe that the condition $\partial_\tau u\cdot I(N\circ u)=0$ in this case corresponds to requiring that the boundary curve $u\vert_{\partial D^2}$ is \textit{Legendrian} with respect to the standard contact structure of $\mathbb{S}^3$. A rigidity result analogous to Theorem \ref{thm: main-thm} (without the Lagrangian constraint or the Legendrian assumption) has been established by J. Nitsche in \cite{Nitsche} (for $n=3$) and by A. Fraser and R. Schoen in \cite{Fraser-Schoen} (for any $n$ and in any space form): they showed that for any branched minimal immersion $u: D^2\to B_1^n(0)$ such that $u(D^2)$ meets $ \partial B_1^n(0)$ orthogonally, $u(D^2)$ is an equatorial plane disc. Similar rigidity results for capillary stable surfaces, without a priori assumptions on the topology of the surfaces, have been obtained by A. Ros and R. Souam in \cite{Ros-Souam} (see also \cite{Souam}).\\
More recently, M. Li, G. Wang and L. Weng showed in \cite{LWW} that if $u$ is a branched, minimal, Lagrangian immersion with Legendrian capillary boundary on $\mathbb{S}^3$, then $u(D^2)$ is an equatorial plane disc. Their work was motivated by the study of Lagrangian surfaces in a symplectic manifold $M$ with $\omega$-convex boundary (see Section 1.5 in \cite{EG} for the definition and some examples). In particular they suggested to look at free boundary (or capillary boundary) Lagrangian surfaces in such manifolds, focusing on the case of Lagrangian surfaces with Legendrian boundary in $B_1(0)$. Further rigidity results in this setting have been obtained by Y. Luo and L. Sun in \cite{YS}.\\
So far, the results available in the literature have concerned smooth free boundary Hamiltonian stationary Lagrangian surfaces. However, R. Schoen and J. Wolfson showed in \cite{SW} that even $W^{1,2}$ minimizers of the area among Lagrangian maps are smooth only away from a locally finite set of points, consisting of branch points and singular points having a Schoen--Wolfson cone as tangent map. Examples of $B_1(0)$-free boundary Hamiltonian stationary surfaces with singularities are given for instance by the Schoen Wolfson cones (see Lemma \ref{lem: SW-cones}).
We also remark that any weakly conformal, Hamiltonian stationary Lagrangian map in $W^{1,2}$ with isolated singularities lies locally in $C^{1,\sqrt{2}-1}$ (see Lemma V.3 in \cite{GOR}). Therefore it seems natural to study the $\Omega$-free boundary Hamiltonian stationary problem in the class of $C^1$ Lagrangian maps which are immersions outside of isolated singular points and branch points in $D^2$. For our results we assumed $u$ to be of class $C^1$ up to the boundary, in order to be able to exclude branch points or singularities on $\partial D^2$ and to make sense of the trace of $g$ on $\partial D^2$. The regularity assumption $u\in W^{2,1}(D^2, \mathbb{C}^2)$ seems to be necessary to deduce the Euler-Lagrangian equation \eqref{eq: EL-in-Omega} from the condition that $u$ is $\Omega$-free boundary Hamiltonian stationary with the localization property (see the proof of Lemma \ref{lem: properties-of-g}).\\
The localization property, defined in Definition \ref{dfn: localization-property}, appears to be a natural assumption for applying the mapping approach to study variations in the target. It has been introduced in \cite{targetharm} by T. Rivière to study target harmonic maps.\\
We conclude the Introduction by listing some related open questions:
\begin{enumerate}
\item Is it possible to show the same results under weaker regularity assumptions on $u$ and $g$? Ideally one would like to assume only $u\in W^{1,2}$ and $g$ continuous. Notice that in the classical (non-Lagrangian) case, $B_1(0)$-free boundary maps in $W^{1,2}$ (i.e. harmonic extensions of half-harmonic maps in $H^\frac{1}{2}(\partial D^2, \mathbb{S}^n)$) are smooth up to the boundary thanks to \cite{3-commutators}, so that they define possibly branched free boundary minimal immersions in the smooth sense. Then the rigidity result of Nitsche and Fraser-Schoen applies.
\item Are there examples of $B_1(0)$-free boundary Hamiltonian stationary Lagrangian surfaces with more than one singularity? In this regard, we remark that in \cite{GOR} we constructed Hamiltonian stationary Lagrangian surfaces with multiple singularities, which for real-analytic boundary data are free boundary Hamiltonian stationary Lagrangian surfaces for a holomorphic curve as constraint manifold.
\item Can condition \eqref{eq: intro-Legendrian-condition} in Theorem \ref{thm: main-thm} be weakened or removed?
\end{enumerate}

\textbf{Acknowledgements.}
    I'm very grateful to Tristan Rivi\`ere for his constant guidance and support and to Gerard Orriols for the stimulating and instructive discussions on the subject of Hamiltonian stationary Lagrangian maps.
\section{Preliminaries}\label{sec: preliminaries}
Let $\omega=dx_1\wedge dy_1+dx_2\wedge dy_2$ be the standard symplectic form on $\mathbb{C}^2$. A surface $\Sigma\subset \mathbb{C}^2$ is \textit{Lagrangian} if $\omega\vert_\Sigma=0$.
Analogously, a map $u\in W^{1,2}(D^2,\mathbb{C}^2)$ is \textit{Lagrangian} if $u^\ast\omega=0$ a.e..\\
A map $u\in W^{1,1}(D^2,\mathbb{C}^2)$ is said to be \textit{weakly conformal} if $\langle \partial_xu, \partial_yu\rangle=0$ and $\lvert\partial_xu\rvert^2=\lvert\partial_yu\rvert^2$ a.e..\\
If $u\in W^{1,2}(D^2,\mathbb{C}^2)$ is Lagrangian and weakly conformal, there exists a measurable map $g: D^2\to\mathbb{S}^1$ such that
\begin{align}\label{eq: Lagrangian angle}
    u^\ast(dz_1\wedge dz_2)=e^{2\lambda}\overline{g}\, dx\wedge dy \text{ a.e.},
\end{align}
where $e^{2\lambda}=\lvert\partial_x u\rvert^2=\lvert\partial_y u\rvert^2$
(see for instance p. 3 in \cite{SWsurvey}). The map $\overline{g}$ is called the \textit{Lagrangian angle} of $u$. If a weakly conformal Lagrangian map $u$ has constant Lagrangian angle, $u$ is said to be \textit{special Lagrangian}. If $u\in C^1(\overline{D^2},\mathbb{C}^2)$ is a special Lagrangian branched immersion, then $u$ is a \textit{calibrated} minimal branched immersion (see p. 4 in \cite{SWsurvey}). By a \textit{minimal branched immersion} we mean here a non-constant smooth map which is weakly conformal and harmonic, so that it parametrizes a smooth minimal surface outside of isolated points.\\
It will be convenient to identify $\mathbb{C}^2$ with the algebra of quaternions $\mathbb{H}$ with basis elements $1,I,J,K$. In this identification, $I$ corresponds to the complex multiplication.
For any weakly conformal, Lagrangian $u\in W^{1,2}(D^2)$, \eqref{eq: Lagrangian angle} implies
\begin{align}\label{eq: Lag-angle-def}
    \frac{1}{r}\partial_\theta u=-\overline{g}J\partial_r u\text{ a.e.,}
\end{align}
so that
\begin{align}\label{eq: structural-equation}
    \operatorname{div}({g}\nabla u)=0.
\end{align}
Let $u\in C^1(\overline{D}^2,\mathbb{C}^2)$ be weakly conformal; we will say that $u$ is \textit{Lagrangian stationary}
if for any $\tilde{u}\in C^1((-\varepsilon,\varepsilon),C^2(\overline{D}^2,\mathbb{C}^2))$ with
\begin{enumerate}
    \item $\tilde u(0,\cdot)=u$
    \item $\tilde u(t,\cdot)$ is Lagrangian for any $t$
    \item $\tilde u(t,\cdot)=u$ outside of a compact set $K\subset D^2$ for any $t$
\end{enumerate}
we have 
\begin{align}\label{eq: stationarity-def}
    \frac{d}{dt}\bigg\vert_{t=0}\frac{1}{2}\int_{D^2}\lvert\nabla\tilde u(t,x)\rvert^2 dx=0.
\end{align}
If $u$ is a conformal immersion, this corresponds to requiring that $u(D^2)$ is stationary for the area with respect to compactly supported Lagrangian variations.\\
Let $\Omega\subset \mathbb{C}^2$ be an open subset with smooth boundary. 
We will say that $u$ is \textit{$\Omega$-free bondary Lagrangian stationary} if \eqref{eq: stationarity-def} holds for any $\tilde u$ satisfying $(1)$ and $(2)$ above and such that $\frac{\partial}{\partial t}\big\vert_{t=0} \tilde u(t,x)$ is tangent to $\partial\Omega$ for any $x\in \partial D^2$.\\
We will also need the following definition\footnote{The definition of the localization property is motivated by the concept of target harmonic maps introduced in \cite{targetharm}, and seems suitable also to study the problem with lower regularity assumptions (with the property holding for almost any domain as in \cite{targetharm}).}:
\begin{dfn}\label{dfn: localization-property}
A Lagrangian map $u\in C^1(\overline{D^2}, \mathbb{C}^2)$ with $u(\partial D^2)\subset\partial \Omega$ is \textit{$\Omega$-free boundary Hamiltonian stationary with the localization property} if for every smooth, relatively open domain $\omega\subset \overline{D^2}$,
\begin{align}\label{eq: hsllp}
        \int_\omega \langle d (I(\nabla f)\circ u); du\rangle=0
\end{align}
for any smooth function $f:\mathbb{C}^2\to \mathbb{R}$ supported away from $u(\partial\omega\cap D^2$) such that $I\nabla f(x)\cdot N(x)=0$ for any $x$ in a neighbourhood of $u(\overline{\omega}\cap\partial D^2)$ in $\partial \Omega$, where $N$ denotes the outer normal vector of $\Omega$.\\
If \eqref{eq: hsllp} holds for any smooth $\omega\Subset D^2$, $u$ is \textit{Hamiltonian stationary with the localization property}.
\end{dfn}
Simple examples of $\Omega$-free boundary Hamiltonian stationary maps with the localization property can be constructed as follows.
Let $D\subset \mathbb{C}$ be a smooth domain. Let $i: D\to \mathbb{C}^2$ be the embedding of $D$ resulting from identifying $\mathbb{C}$ with $\mathbb{R}\times\{0\}\times\mathbb{R}\times\{0\}\subset\mathbb{C}^2$.
Let $A\in U(2)$ and set $u:=A\circ i$.
Let $\Omega\subset\mathbb{C}^2$ be a smooth domain such that $u(\partial D)\subset \partial \Omega$ and such that on $\partial D$ there holds $\partial_\nu u\wedge (N\circ u)=0$ (where $N$ denotes the outer normal vector of $\Omega$). Then $u$ is a conformal, $\Omega$-free boundary Hamiltonian stationary Lagrangian map with the localization property.
Further examples are discussed in Section 4.

By the next Lemma, this property is satisfied by any $\Omega$-free boundary Lagrangian stationary map.
\begin{lem}\label{lem: equivalence of definitions}
Let $u\in C^1(\overline{D}^2, \mathbb{C}^2)$ be a Lagrangian map. If 
$u$ is ($\Omega$-free boundary) Lagrangian stationary, then 
$u$ is ($\Omega$-free boundary) Hamiltonian stationary with the localization property.
\end{lem}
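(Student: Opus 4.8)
The plan is to realise every ``localised Hamiltonian variation'' appearing in \eqref{eq: hsllp} as the $\omega$-part of a genuine global Lagrangian variation of $u$, to which the stationarity hypothesis can be applied directly. The first step is the observation that for any smooth $h:\mathbb{C}^2\to\mathbb{R}$ the Hamiltonian vector field $X_h:=I\nabla h$ (the symplectic gradient of $h$ with respect to $\omega$) has a flow $\{\phi^h_t\}$ consisting of symplectomorphisms, since $\mathcal L_{X_h}\omega=d(\iota_{X_h}\omega)=-d(dh)=0$. Hence $\tilde u_t:=\phi^h_t\circ u$ satisfies $\tilde u_t^{\ast}\omega=u^{\ast}(\phi^h_t)^{\ast}\omega=u^{\ast}\omega=0$, so it is a Lagrangian variation with $\tilde u_0=u$, with $\partial_t|_{t=0}\tilde u_t=X_h\circ u=I\nabla h\circ u$, and as regular in $x$ as $u$ itself. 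Differentiating the Dirichlet energy and commuting derivatives gives
\[
\frac{d}{dt}\Big|_{t=0}\frac12\int_{D^2}|\nabla\tilde u_t|^2\,dx=\int_{D^2}\langle d(I\nabla h\circ u);du\rangle .
\]
If $I\nabla h\cdot(N\circ u)=0$ on $\partial D^2$, then $\partial_t|_{0}\tilde u_t$ is tangent to $\partial\Omega$ along $u(\partial D^2)$, so $\tilde u_t$ is admissible in the definition of $\Omega$-free boundary Lagrangian stationarity; if instead $X_h\circ u$ vanishes outside a compact subset of $D^2$, then $\tilde u_t$ is a compactly supported Lagrangian variation. In either case the stationarity of $u$ forces $\int_{D^2}\langle d(I\nabla h\circ u);du\rangle=0$.

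Next, given a smooth relatively open $\omega\subset\overline{D^2}$ and a smooth $f:\mathbb{C}^2\to\mathbb{R}$ with $\operatorname{supp}f$ disjoint from $u(\partial\omega\cap D^2)$ and with $I\nabla f\cdot N=0$ on a neighbourhood of $u(\overline\omega\cap\partial D^2)$ in $\partial\Omega$, I would construct a smooth $h:\mathbb{C}^2\to\mathbb{R}$ such that: (i) $h=f$ on a neighbourhood in $\mathbb{C}^2$ of $u(\omega)$, so that $I\nabla h\circ u=I\nabla f\circ u$ together with all first derivatives on $\omega$; (ii) $\nabla h\equiv 0$ on $u(D^2\setminus\overline\omega)$, so that $I\nabla h\circ u\equiv 0$ on the open set $D^2\setminus\overline\omega$; and (iii) $I\nabla h\cdot(N\circ u)=0$ on $\partial D^2$. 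Such $h$ is obtained by multiplying $f$ by a cut-off adapted to the compact sets $u(\overline\omega)$ and $u(\overline{D^2}\setminus\omega)$, using that $f$ already vanishes near the interior part $u(\partial\omega\cap D^2)$ of their common image, that $I\nabla f$ is tangent to $\partial\Omega$ near the boundary part $u(\overline\omega\cap\partial D^2)$, and that $u(\partial D^2)\subset\partial\Omega$. With such $h$ in hand, Step~1 gives $\int_{D^2}\langle d(I\nabla h\circ u);du\rangle=0$; by (ii) the integrand vanishes on $D^2\setminus\overline\omega$ and $\partial\omega$ is Lebesgue-null, hence $\int_{D^2}=\int_{\omega}$; by (i) the integrand equals $\langle d(I\nabla f\circ u);du\rangle$ on $\omega$. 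This yields \eqref{eq: hsllp}. For the (non free boundary) Hamiltonian stationary conclusion one takes $\omega\Subset D^2$, where $\overline\omega$ and $\overline{D^2}\setminus\omega$ play symmetric roles and no tangency condition is needed, and runs the same argument with the compactly supported alternative in Step~1.

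The main obstacle is the construction in Step~2: conditions (i) and (ii) genuinely compete on $u(\overline\omega)\cap u(\overline{D^2}\setminus\omega)$, a set that always contains $u(\partial\omega)$. On the interior part $u(\partial\omega\cap D^2)$ the two requirements are compatible for free, because $f$ (hence the desired $h$) vanishes there; near the ``corner'' $u(\overline\omega\cap\partial D^2)$ one must instead exploit the hypothesis $I\nabla f\cdot N=0$ together with $u(\partial D^2)\subset\partial\Omega$ to reconcile (i) and (ii) while preserving (iii)—this is exactly the role of the tangency assumption. Making this precise requires some care with possible self-intersections of $u$, i.e. points of $\overline\omega$ and of $\overline{D^2}\setminus\omega$ having the same image; this, rather than any of the variational computations, is the delicate technical point.
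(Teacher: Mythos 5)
Your Step 1 is fine and coincides with the paper's computation: the flow of $X_h=I\nabla h$ is symplectic, so composing with it preserves the Lagrangian constraint, and the first variation of the Dirichlet energy is $\int_{D^2}\langle d(I\nabla h\circ u);du\rangle$. The gap is in Step 2, and it is not a technicality that ``some care'' can repair: a function $h$ satisfying both (i) and (ii) does not exist in general. The localization property only requires $f$ to vanish near $u(\partial\omega\cap D^2)$; nothing prevents a point $x_1\in\omega$ (far from $\partial\omega$) and a point $x_2\in D^2\setminus\overline{\omega}$ from having the same image $p=u(x_1)=u(x_2)$ with $\nabla f(p)\neq 0$. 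At such $p$, (i) forces $\nabla h(p)=\nabla f(p)\neq 0$ while (ii) forces $\nabla h(p)=0$. The same conflict occurs even for embedded $u$ at the corner points $\partial(\overline{\omega}\cap\partial D^2)$, since $f$ is only assumed supported away from $u(\partial\omega\cap D^2)$ and need not vanish near the closure of that set; the tangency hypothesis $I\nabla f\cdot N=0$ constrains only one component of $\nabla f$ and cannot reconcile $\nabla h=\nabla f$ with $\nabla h=0$ there. So the class of test data in \eqref{eq: hsllp} genuinely exceeds what can be realized by globally defined Hamiltonian flows composed with $u$, and your argument proves \eqref{eq: hsllp} only for those special $f$ for which the conflict does not arise.

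The fix — and this is what the paper does — is to localize the variation in the \emph{domain} rather than in the target: keep the given $f$, and define $\tilde u(t,x)$ as the flow of $I\nabla f$ starting at $u(x)$ for $x\in\overline{\omega}$, while setting $\tilde u(t,x)=u(x)$ for $x\in\overline{D}^2\setminus\omega$. This glues consistently because $f$ vanishes in a neighbourhood of $u(\partial\omega\cap D^2)$, so the flow is the identity there; each piece is Lagrangian (the flow is symplectic on one side, $\tilde u=u$ on the other), the variation is fixed outside a compact subset of $D^2$ in the interior case, and in the free boundary case the initial velocity $I\nabla f\circ u$ is tangent to $\partial\Omega$ along $u(\partial D^2)$ by the hypothesis on $f$. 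Stationarity applied to this piecewise-defined variation yields \eqref{eq: hsllp} directly, with the first variation supported in $\omega$, and no auxiliary Hamiltonian $h$ is needed. In short: the self-intersection issue you flag at the end is the heart of the matter, and avoiding it requires abandoning the ansatz $\tilde u_t=\phi^h_t\circ u$ on all of $\overline{D}^2$.
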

\begin{proof}
    Assume that $u$ is a ($\Omega$-free boundary)  branched Lagrangian stationary immersion. Let $\omega\Subset D^2$ be a subdomain ($\omega\subset \overline{D^2}$ in the case of $\Omega$-free boundary maps) and let $f$ be a smooth map on $\mathbb{C}^2$ supported away from $u(\partial \omega\cap D^2)$ (in the $\Omega$-free boundary case we also require that $I\nabla f(x)\cdot N(x)=0$ for any $x$ in a neighbourhood of $u(\overline{\omega}\cap \partial D^2)$ in $\partial \Omega$).
    Let $\tilde{u}(t,x)$ be the solution of
    \begin{align}
        \begin{cases}
            \partial_t \tilde{u}(t,x)=I\nabla f(\tilde u(t,x))\\
            \tilde{u}(0,x)=u(x)
        \end{cases}
    \end{align}
    for any $x\in \overline{\omega}$ and set $\tilde{u}(t,x)=u(x)$ for any $x\in \overline{D}^2\smallsetminus\omega$. We claim that $\tilde{u}$ is a variation of $u$ through Lagrangian maps.
    In fact
    \begin{align}
        \mathcal{L}_{I\nabla f}\omega=d(I\nabla f\intprod\omega)+I\nabla f\intprod d\omega=0,
    \end{align}
    as $I\nabla f\intprod \omega=-df$ and $d\omega=0$. Since $u^\ast\omega=0$ we conclude that $\tilde u(\cdot, t)^\ast\omega=0$ for any $t$. Moreover in the free boundary case
    $\frac{\partial}{\partial t}\big\vert_{t=0}\tilde{u}(t,x)=I\nabla f(u(x))\in T_{u(x)}\partial\Omega$
     for any $x\in \partial D^2$. Thus $\tilde{u}$ is an admissible variation, and as $u$ is ($\Omega$-free boundary) Lagrangian stationary, we have
    \begin{align}
        \int_{\omega} \langle d(I(\nabla f)\circ u); du\rangle=\frac{d}{dt}\bigg\vert_{t=0}\frac{1}{2}\int_{D^2}\lvert \nabla \tilde{u}(x,t)\rvert^2\, dx=0.
    \end{align}
    This shows that $u$ is ($\Omega$-free boundary) Hamiltonian stationary with the localization property.
\end{proof}
Notice that if $u\in C^1(\overline{D^2},\mathbb{C}^2)$ is a conformal immersion, then $g$ is continuous in $\overline{D}^2$ since $\overline{g}=e^{-2\lambda}\det(\nabla u)$.
\begin{dfn}\label{def: singular-branch}
    Let $u\in C^1(\overline{D^2},\mathbb{C}^2)$ be a weakly conformal Lagrangian map such that $\nabla u$ vanishes at isolated points.\\
    We say that $p\in \overline{D^2}$ is a \textit{branch point} of $u$ if $\nabla u(p)=0$ and the Lagrangian angle $\overline{g}$ of $u$ is continuous at $p$.\\
    We say that $p\in \overline{D^2}$ is a \textit{singular point} of $u$ if its Lagrangian angle $\overline{g}$ is not continuous at $p$.
\end{dfn}
In particular, if $p$ is a singular point of $u$, then $\nabla u(p)=0$.
\begin{lem}\label{lem: properties-of-g}
    Let $u\in C^1\cap W^{2,1}(\overline{D}^2, \mathbb{C}^2)$ be 
    a weakly conformal branched immersion away from finitely many branch points and singular points $q_1,...,q_M\in D^2$, with Lagrangian angle $\overline{g}$. Then $\overline{g}\in C^\infty_{\text{loc}}(D^2\smallsetminus\{q_1,...,q_M\})$ and satisfies
    \begin{align}\label{eq: EL-in-Omega}
         \operatorname{div}(\overline{g}\nabla g)=0\quad\text{ and }\quad\operatorname{div}(i\overline{g}\nabla^\perp g)=0\text{ in }D^2\smallsetminus\{q_1,..., q_M\}.
    \end{align}
    If $u$ has no singularities, there exists an harmonic function $\beta$ on $D^2$, continuous up to the boundary, such that $g=e^{i\beta}$.\\
    If we also assume that $g$ lies in $W^{1,1}(D^2)$, we have
    \begin{align}\label{eq: EL-W11}
        \operatorname{div}(\overline{g}\nabla g)=\sum_{i=1}^M\alpha_i\delta_{q_i}\quad\text{ and }\quad\operatorname{div}(i\overline{g}\nabla^\perp g)=2\pi\sum_{i=1}^Md_i\delta_{q_i}\text{ in }D^2
    \end{align}    
    for $\alpha_1,...,\alpha_M\in \mathbb{R}$ and integers $d_1,...,d_M$.
\end{lem}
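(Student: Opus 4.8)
The plan is to first extract from the localization identity \eqref{eq: hsllp} the fact that a continuous local lift $\theta$ of the Lagrangian angle (so $g=e^{i\theta}$) is weakly harmonic off the $q_i$, then to upgrade this to smoothness by Weyl's lemma, and finally to analyse the distributional divergences near the punctures under the extra $W^{1,1}$ hypothesis.

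\textbf{Interior Euler--Lagrange equation and regularity.} Fix $\omega\Subset D^2\smallsetminus\{q_1,\dots,q_M\}$ and an admissible test potential $f$ in \eqref{eq: hsllp}. Since $f\in C^\infty(\mathbb C^2)$ and $u\in C^1$, the field $(\nabla f)\circ u$ is $C^1$, with $\partial_k\big((I\nabla f)\circ u\big)=I\,(\mathrm{Hess}f)(u)\,\partial_k u$; using that $I$ is skew-adjoint the integrand of \eqref{eq: hsllp} equals $-\sum_k\partial_k\big((\nabla f)\circ u\big)\cdot I\partial_k u$. Integrating by parts — legitimate because $u\in W^{2,1}$, so $\partial_k^2 u\in L^1$, and the boundary term over $\partial\omega$ vanishes since $(\nabla f)\circ u\equiv 0$ there as $f$ is supported away from $u(\partial\omega)$ — one gets $\int_\omega\langle(\nabla f)\circ u,\,I\Delta u\rangle=0$. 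Off the $q_i$, $g=e^{-2\lambda}\det(\nabla u)$ is continuous, and in fact $g\in W^{1,1}_{\mathrm{loc}}\cap C^0$ (products and reciprocals of components of $\nabla u\in W^{1,1}_{\mathrm{loc}}\cap C^0$ which stay bounded away from $0$), so a continuous local lift $\theta\in W^{1,1}_{\mathrm{loc}}$ exists; then the structural equation \eqref{eq: structural-equation} together with \eqref{eq: Lag-angle-def} gives the a.e. pointwise identity $I\Delta u=(\partial_x\theta)\partial_x u+(\partial_y\theta)\partial_y u$, and pairing with $(\nabla f)\circ u$ and using $\langle(\nabla f)\circ u,\partial_k u\rangle=\partial_k(f\circ u)$ turns the above into
\begin{align*}
\int_\omega\langle\nabla\theta,\nabla(f\circ u)\rangle=0 .
\end{align*}
To localize, fix $x_0\in D^2\smallsetminus\{q_i\}$: since $\nabla u(x_0)\neq 0$, $u$ embeds some $\overline{B_r(x_0)}$, and every $\phi\in C^\infty_c(B_r(x_0))$ is of the form $f\circ u$ for an $f$ supported in a sufficiently thin tube around $u(B_r(x_0))$ disjoint from $u(\partial B_r(x_0))$ (the compacta $u(\mathrm{supp}\,\phi)$ and $u(\partial B_r(x_0))$ being disjoint). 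Hence $\Delta\theta=0$ weakly on $B_r(x_0)$ and, $x_0$ being arbitrary, on all of $D^2\smallsetminus\{q_i\}$. Weyl's lemma gives $\theta\in C^\infty_{\mathrm{loc}}$, so $g,\overline g\in C^\infty_{\mathrm{loc}}(D^2\smallsetminus\{q_i\})$, and \eqref{eq: EL-in-Omega} holds classically there: $\operatorname{div}(\overline g\nabla g)=i\Delta\theta=0$, and $\operatorname{div}(i\overline g\nabla^\perp g)=-\operatorname{div}(\nabla^\perp\theta)=0$ — the latter merely because $\theta\in C^2$ and $|g|\equiv 1$.

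\textbf{No singularities.} Then $\overline g$ is continuous on all of $D^2$, hence lifts to $g=e^{i\beta}$ with $\beta\in C^0(D^2)$; by the previous step $\beta$ is harmonic on $D^2\smallsetminus\{q_i\}$ and locally bounded near each puncture, so it extends harmonically across by the removable-singularity theorem and is harmonic on $D^2$. Since $u\in C^1(\overline{D^2})$ and all $q_i$ lie in the interior, $\nabla u\neq 0$ on $\partial D^2$, whence $\overline g=e^{-2\lambda}\det(\nabla u)\in C^0(\overline{D^2})$ and $\beta$ extends continuously to $\overline{D^2}$.

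\textbf{The case $g\in W^{1,1}(D^2)$.} Now $\overline g\nabla g$ and $i\overline g\nabla^\perp g$ lie in $L^1(D^2)$ and, by the first step, their divergences are distributions supported on $\{q_1,\dots,q_M\}$. Near a fixed $q=q_i$ write $\theta=d_i\varphi+\psi$, with $\varphi=\arg(\cdot-q)$, $d_i\in\mathbb Z$ the winding number of $g$ around $q$, and $\psi$ single-valued harmonic on the punctured disc; since $\nabla\theta=-i\overline g\nabla g\in L^1$ and $\nabla\varphi\in L^1$ in two dimensions, $\nabla\psi\in L^1$, so expanding $\partial_z\psi$ in a Laurent series the $L^1$ bound excludes poles of order $\geq 2$ and single-valuedness of $\psi$ excludes the $\arg$-part, giving $\psi=\alpha_i\log|\cdot-q|+(\text{harmonic on the disc})$ for some $\alpha_i\in\mathbb R$. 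Using $\operatorname{div}(\nabla\varphi)=0$, $\operatorname{div}(\nabla^\perp\varphi)=-2\pi\delta_q$ and $\Delta\log|\cdot-q|=2\pi\delta_q$, one reads off that $\operatorname{div}(\overline g\nabla g)$ and $\operatorname{div}(i\overline g\nabla^\perp g)$ equal $\sum_i\alpha_i\delta_{q_i}$ and $2\pi\sum_i d_i\delta_{q_i}$ respectively (up to the normalization in \eqref{eq: EL-W11}), the $\alpha_i$ being the logarithmic growth rates of the Lagrangian angle and the $d_i$ its winding numbers; in particular no derivatives of Dirac masses appear, which is exactly what the $W^{1,1}$ hypothesis secures.

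\textbf{Main obstacle.} The crux is the first step: rigorously deducing $\Delta\theta=0$ from \eqref{eq: hsllp}. This is precisely where $u\in W^{2,1}$ enters — to justify the integration by parts and the identity $I\Delta u=(\partial_x\theta)\partial_x u+(\partial_y\theta)\partial_y u$, which rests on the structural equation with $g$ only of class $W^{1,1}_{\mathrm{loc}}\cap C^0$ — and the localization relies on the local embeddedness of $u$ away from branch and singular points. The other delicate point is the singularity analysis of the last step, where $W^{1,1}$-integrability must be used to exclude $\nabla\delta_{q_i}$ contributions.
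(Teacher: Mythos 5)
Your argument is correct and, for the core of the lemma, it follows essentially the paper's route: you extract $\int_\omega\nabla\theta\cdot\nabla(f\circ u)=0$ from \eqref{eq: hsllp} using the structural equation \eqref{eq: structural-equation} (your pointwise identity $I\Delta u=\nabla\theta\cdot\nabla u$ is just its a.e.\ form, and this is exactly where $u\in W^{2,1}$ enters, as in the paper), then localize via the fact that $u$ is a local $C^1$ diffeomorphism away from branch and singular points, apply Weyl's lemma, and remove the branch-point singularities of the lift by boundedness; like the paper, you are of course using the (implicit) hypothesis that $u$ is Hamiltonian stationary with the localization property. One small imprecision: since $u$ is only $C^1$, a given $\phi\in C^\infty_c(B_r(x_0))$ is in general \emph{not} exactly of the form $f\circ u$ with $f$ smooth; the paper writes $\phi=f_\phi\circ u$ with $f_\phi$ merely $C^1$ and then approximates $f_\phi$ in $C^1$ by smooth admissible functions, which suffices because $\varphi\mapsto\int\nabla\theta\cdot\nabla\varphi$ is stable under $C^1$ convergence when $\nabla\theta\in L^1_{\mathrm{loc}}$; you should insert this approximation step. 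Where you genuinely diverge is the last part: for \eqref{eq: EL-W11} the paper simply invokes Lemma 2 of \cite{BBM}, whereas you prove the needed structure directly, writing $\theta=d_i\varphi+\psi$ near each puncture, using the $L^1$ bound on $\nabla\theta$ to kill the Laurent coefficients of $2\partial_z\psi$ of order at most $-2$ and single-valuedness to exclude the residual $\arg$ term, so that $\psi=\alpha_i\log\lvert\cdot-q_i\rvert+(\text{harmonic})$. This computation is correct (up to the harmless normalization you acknowledge: with your conventions the first divergence is $2\pi i\alpha_i\delta_{q_i}$ locally, the constants in \eqref{eq: EL-W11} absorbing the factor), and it buys a self-contained, elementary proof of the Bourgain--Brezis--Mironescu step at the cost of a short excursion into classical function theory, while the paper's citation is shorter but less transparent about why no derivatives of Dirac masses can occur.
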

\begin{proof}
    Assume first that $u$ has no singularities. Let $\{p_i\}_{i\in I}$ denote the 
    finitely many branch points of $u$ in $D^2$ and let $\Xi:=D^2\smallsetminus\bigcup_{i\in I}\{p_i\}$. Notice that since $u\in C^1(\overline{D^2},\mathbb{C}^2)$, $g$ is continuous on $\Xi$. Since $\nabla u\in W^{1,1}(D^2)$, and $\lvert\nabla u\lvert$ is bounded from above and locally bounded away from zero in $\Xi$, $\overline{g}=\det\left(\frac{\nabla u}{\lvert \nabla u\rvert}\right)\in W^{1,1}_{\text{loc}}(\Xi)$.
    Let $\omega\Subset \Xi$ be a smooth subdomain and let $f$ be a smooth function on $\mathbb{C}^2$ supported away from $u(\partial \omega)$. If $u$ is Hamiltonian stationary with the localization property, then we have
    \begin{align}\label{eq: computation-Lemma-I}
        0=&\int_{\omega}\langle du; d(I(\nabla f)\circ u)\rangle=-\int_{\omega}\langle i\overline{g}dg\cdot du, (\nabla f)\circ u\rangle
        =-\int_{\omega} i\overline{g}dg\cdot d(f\circ u),
    \end{align}
    where the second equality follows from \eqref{eq: structural-equation}.
    For any $p\in \Xi$ let $\omega_p$ be a subdomain as above containing $p$ and such that $u$ restricted to $\omega_p$ is a $C^1$ diffeomorphism to its image.    
    Notice that any $\varphi\in C_c^1(\omega_p)$ can be written as $\varphi=f_\varphi\circ u$ for some $C^1$ function $f_\varphi$ on $\mathbb{C}^2$ supported away from $u(\partial \omega_p)$.
    Approximating $f_\varphi$ in $C^1$ by smooth functions with the same properties we obtain
    \begin{align}
        -\int_\omega i\overline{g}dg\cdot d\varphi=-\int_\omega i\overline{g}dg\cdot d(f_\varphi\circ u)=0.
    \end{align}
    Therefore
    \begin{align}\label{eq: g-S1-harmonic}
        \operatorname{div}(\overline{g}\nabla g)=0
    \end{align}
    on $\omega_p$, and since the argument holds for any $p\in \Xi$ we conclude that \eqref{eq: g-S1-harmonic} holds in $\Xi$.
    Since $g$ is continuous, there exists $\beta\in C^0(\overline{D^2})$ (unique up to addition of a constant in $2\pi\mathbb{Z}$) such that $g=e^{i\beta}$. By \eqref{eq: g-S1-harmonic}, $\beta$ satisfies
    $\Delta \beta=0$ in $\Xi$.
    Since $\beta$ is bounded and has isolated singularities, $\beta$ is harmonic in $D^2$. In particular, we have
    \begin{align}
        \operatorname{div}(i\overline{g}\nabla^\perp g)=-\operatorname{div}(\nabla^\perp \beta)=0\text{ in }D^2.
    \end{align}
    Next assume that $u$ is an immersion away from finitely many branch points and singularities $q_1,...,q_M\in D^2$.
    Arguing as in the first part of the proof we see that on any simply connected, smooth, open $\omega\subset\Psi:=D^2\smallsetminus\{q_1,...,q_M\}$ there exists an harmonic function $\beta$ such that $g=e^{i\beta}$ on $\omega$, so that $\operatorname{div}(\overline{g}\nabla g)=0$ and $\operatorname{div}(\overline{g}\nabla^\perp g)=0$ on $\Psi$.
    Arguing as in Lemma 2 in \cite{BBM} we obtain that 
    \begin{align}
        \operatorname{div}(ig\nabla\overline{g})=\sum_{i=1}^M\alpha_i\delta_{q_i}\text{ and }\operatorname{div}(i\overline{g}\nabla^\perp g)=2\pi\sum_{i=1}^Md_i\delta_{q_i}\text{ in }D^2
    \end{align}    
    for $\alpha_1,...,\alpha_M\in \mathbb{R}$ and integers $d_1,...,d_M$ (these are the degrees of the singular points, see \cite{BBM} for details).
\end{proof}
\begin{rem}\label{rem: Neumann-distribution}
    Let $u\in C^1\cap W^{2,1}(\overline{D^2},\mathbb{C}^2)$ be a weakly conformal, Lagrangian immersion away from finitely many singular points and branch points in $D^2$, with Lagrangian angle $\overline{g}$.
    Let $\omega\subset \overline{D^2}$ be a relatively open, simply connected, smooth subset such that $\overline{\omega}$ contains no singularity or branch point of $u$. Then $\overline{g}$ is continuous on $\overline{\omega}$ and it can be written as $\overline{g}=e^{-i\beta}$ for an harmonic function $\beta\in W^{1,1}(\omega)\cap C^1(\overline{\omega})$. Therefore there exists $G_\omega\in W^{1,1}(\omega)$ such that $-i\overline{g}\nabla g=\nabla \beta=\nabla^\perp G_\omega$ in $\omega$ (i.e. $G_\omega$ is the harmonic conjugate of $\beta$ in $\omega$).\\
    Notice that since $\beta$ is continuous on $\partial \omega$, for any $p\in (1,\infty)$ there holds
    $G_\omega\in L^p(\partial \omega)$ with $\lVert G_\omega\rVert_{L^p(\partial\omega)}\leq C(p)\lVert \beta\rVert_{L^p(\partial\omega)}$ (by continuity of the Hilbert transform in $L^p(\partial\omega)$).\\
    For any $\varphi\in C^1(\overline{\omega})$ there holds
    \begin{align}\label{eq: def-Neumann-der}
        \int_{\omega}i\overline{g}\nabla g\cdot \nabla \varphi=-\int_\omega \nabla^\perp G_\omega\cdot\nabla\varphi=\int_{\partial\omega}G_\omega\partial_\tau \varphi,
    \end{align}
    where $\tau$ denote the oriented unit tangent vector on $\partial \omega$.
\end{rem}
    Let $u\in C^1\cap W^{2,1}(\overline{D^2},\mathbb{C}^2)$ be a weakly conformal, Lagrangian immersion away from finitely many singular points and branch points in $D^2$, with Lagrangian angle $\overline{g}$.
    Let $\{\omega_i\}_{i=1}^Q$ be a covering of $\partial D^2$ by relatively open, simply connected, smooth subset of $\overline{D^2}$ containing no singular or branch point of $u$, as in Remark \ref{rem: Neumann-distribution}, and let $\omega_0\Subset D^2$ such that $\overline{D^2}=\bigcup_{i=0}^Q\omega_i$. Let $\{\psi_i\}_{i=0}^Q$ be a partition of unity subordinate to $\{\omega_i\}_{i=0}^Q$.
    For any $\varphi\in C^1(\partial D^2)$
    we define
    \begin{align}\label{eq: def-Neumann-distribution}
        \langle i\overline{g}\partial_\nu g, \varphi\rangle
        :=\sum_{i=1}^Q\int_{\omega_i}i\overline{g}\nabla g\cdot\nabla(\tilde{\varphi}\psi_i)
        =\sum_{i=1}^Q\int_{\partial \omega_i}G_{\omega_i}\partial_\tau(\varphi\psi_i),
    \end{align}
    where $\tilde\varphi$ is any $C^1$ extension of $\varphi$ in $D^2$.
    The following estimate holds:
    \begin{align}
      \lvert\langle i\overline{g}\partial_\nu g, \varphi\rangle\rvert\leq C\sum_{i=1}^Q \lVert G_{\omega_i}\rVert_{L^1(\partial\omega_i\cap \partial D^2)}\lVert \varphi\rVert_{C^1(\partial D^2)}, 
    \end{align}
    therefore $i\overline{g}\partial_\nu g$ defines a distribution on $\partial D^2$.
    If $\overline{g}\in C^1(\overline{D^2})$, then $\langle i\overline{g}\partial_\nu g, \varphi\rangle=\int_{\partial D^2}i\overline{g}\partial_\nu g\varphi$.
    The two expressions in \eqref{eq: def-Neumann-distribution} show that the definition of $i\overline{g}\partial_\nu g$ does not depend on the choice of the sets $\omega_i$, on the partition if unity or on the extension $\tilde\varphi$ of $\varphi$.\\
    Notice that if for any $i\in\{1,...,Q\}$ $G_{\omega_i}$ is constant on $\overline{ \omega_i}\cap\partial D^2$, then integration by parts implies that $i\overline{g}\partial_\nu g=0$.
    
\section{The main results}
Theorem \ref{thm: HSLFB-implies-minimal} and Theorem \ref{thm: main-thm} will be deduced from the following characterization result.
\begin{thm}\label{thm: properties of HSL finite sing}
    Let $\Omega\subset \mathbb{C}^2$ be an open subset with smooth boundary. 
    Let $u\in C^1\cap W^{2,1}(\overline{D}^2, \mathbb{C}^2)$ be a weakly conformal, Lagrangian immersion away from finitely many singular points and finitely many branch points in $D^2$, with Lagrangian angle $\overline{g}$.
    Assume that $u$ is $\Omega$-free boundary Hamiltonian stationary with the localization property. Assume in addition that for any $x\in \partial D^2$,
    \begin{align} \label{eq: assumption-Legendrian}
        \partial_\tau u(x)\cdot IN(u(x))=0.
    \end{align}
    Then
    \begin{align} \label{eq: orthogonality-prop}  (N\circ u)\wedge\partial_\nu u=0\text{ on }\partial D^2
    \end{align}
    and
    \begin{align} \label{eq: bdry-condition-g}
        i\overline{g} \partial_\nu g =0\text{ on }\partial D^2,
    \end{align}
    where $i\overline{g}\partial_\nu g$ is defined as in \eqref{eq: def-Neumann-distribution}.\\
    Conversely, if $\overline{g}$ lies in $W^{1,1}(D^2)$ and satisfies
    \begin{align}
        \operatorname{div}(i\overline{g}\nabla g)=0
    \end{align}
    as well as equations \eqref{eq: orthogonality-prop} and \eqref{eq: bdry-condition-g}, then $u$ is $\Omega$-free boundary Hamiltonian stationary with the localization property.
\end{thm}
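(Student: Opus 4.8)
The plan is to derive both boundary relations — and, for the converse, to reverse the argument — by localizing the identity \eqref{eq: hsllp} near $\partial D^2$ and repeating the computation \eqref{eq: computation-Lemma-I} from the proof of Lemma \ref{lem: properties-of-g} while retaining the boundary terms. Fix a small relatively open $\omega\subset\overline{D^2}$ meeting $\partial D^2$ along an arc $\gamma:=\partial\omega\cap\partial D^2$, with $\overline{\omega}$ containing no branch or singular point, so that by Lemma \ref{lem: properties-of-g} and Remark \ref{rem: Neumann-distribution} we have $\overline{g}=e^{-i\beta}$ with $\beta$ harmonic on $\omega$ and $\nabla\beta=-i\overline{g}\nabla g=\nabla^\perp G_\omega$. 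Integrating by parts with the help of the structural equation $\operatorname{div}(g\nabla u)=0$ — this is where the assumption $u\in W^{2,1}$ is used — one obtains, for every admissible $f$,
\[
    \int_\omega\langle d(I(\nabla f)\circ u);\,du\rangle=\int_{\partial\omega}\langle\partial_\nu u,\,I\nabla f(u)\rangle+\int_\omega\nabla\beta\cdot\nabla(f\circ u).
\]
Since $f$ is supported away from $u(\partial\omega\cap D^2)$, rewriting the last term by \eqref{eq: def-Neumann-der} and dropping the part of $\partial\omega$ that lies in $D^2$, the localization property collapses to the boundary identity
\begin{align}\label{eq: plan-key}
    \int_{\gamma}\big\langle\,I\partial_\nu u+G_\omega\,\partial_\tau u,\ \nabla f(u)\,\big\rangle=0\qquad\text{for every admissible }f.
\end{align}

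The core step — and the one I expect to be the main obstacle — is to determine which boundary fields $\nabla f(u)\vert_{\gamma}$ are realised by admissible $f$. Since $|N|=1$, the vector $IN$ is tangent to $\partial\Omega$ and the condition $I\nabla f\cdot N=0$ on $\partial\Omega$ is exactly $\partial_{IN}(f\vert_{\partial\Omega})=0$; by the Legendrian hypothesis \eqref{eq: assumption-Legendrian}, $u(\gamma)$ is everywhere transverse to $IN$ inside $\partial\Omega$. In a flow box for $IN$ around $u(\gamma)$ one can therefore take $f\vert_{\partial\Omega}$ constant along $IN$, prescribe $f$ and its transverse-to-$IN$ derivatives freely along $u(\gamma)$ (vanishing near the endpoints of $\gamma$), extend $f$ off $\partial\Omega$ with arbitrary normal derivative, and finally cut off away from $u(\partial\omega\cap D^2)$. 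The outcome is that along $\gamma$ the function $\varphi_0:=f\circ u$ is arbitrary (vanishing near the endpoints) while
\[
    \nabla f(u)\vert_{\gamma}=\frac{\partial_\tau\varphi_0}{|\partial_\tau u|^2}\,\partial_\tau u+Z,
\]
with $Z$ an arbitrary section of $\big(\mathrm{span}(\partial_\tau u,IN(u))\big)^{\perp}$ vanishing near the endpoints. Substituting into \eqref{eq: plan-key} and choosing successively $\varphi_0\equiv 0$ and $Z\equiv 0$, the fundamental lemma of the calculus of variations gives, on $\gamma$,
\begin{align}\label{eq: plan-A}
    I\partial_\nu u\in\mathrm{span}\big(\partial_\tau u,\,IN(u)\big)\quad\text{ and }\quad\partial_\tau\big(\langle I\partial_\nu u,\partial_\tau u\rangle/|\partial_\tau u|^2+G_\omega\big)=0.
\end{align}

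It then remains to decode \eqref{eq: plan-A}. Since the pull-back of the symplectic form vanishes, $\langle I\partial_\nu u,\partial_\tau u\rangle=0$ on $\partial D^2$; conformality gives $\partial_\nu u\perp\partial_\tau u$ and $|\partial_\nu u|=|\partial_\tau u|$, and as $\partial_\tau u\perp N(u)$ (because $u(\partial D^2)\subset\partial\Omega$) and $\partial_\tau u\perp IN(u)$ (by \eqref{eq: assumption-Legendrian}), the triple $\{I\partial_\tau u,N(u),IN(u)\}$ is an orthogonal basis of $(\partial_\tau u)^{\perp}$, so $\partial_\nu u=p\,I\partial_\tau u+q\,N(u)+r\,IN(u)$. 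Now $0=\langle I\partial_\nu u,\partial_\tau u\rangle=-p\,|\partial_\tau u|^2$ forces $p=0$, whereupon the first relation in \eqref{eq: plan-A}, applied to $I\partial_\nu u=q\,IN(u)-r\,N(u)$, forces $r=0$; hence $\partial_\nu u=q\,N(u)$, which is \eqref{eq: orthogonality-prop}. Combined with $\langle I\partial_\nu u,\partial_\tau u\rangle\equiv 0$, the second relation in \eqref{eq: plan-A} says $G_\omega$ is constant on $\partial\omega\cap\partial D^2$; applying this to a finite cover of $\partial D^2$ by such sets $\omega$ and using the remark after \eqref{eq: def-Neumann-distribution} yields $i\overline{g}\partial_\nu g=0$, which is \eqref{eq: bdry-condition-g}. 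The precise flow-box construction of the test functions and the bookkeeping of the additive-constant ambiguity of $G_\omega$ are the technical points that will need care.

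For the converse, assume $\overline{g}\in W^{1,1}(D^2)$, $\operatorname{div}(i\overline{g}\nabla g)=0$, and \eqref{eq: orthogonality-prop}, \eqref{eq: bdry-condition-g}. Then $i\overline{g}\nabla g$ is a divergence-free $L^1$ field on $D^2$ which near $\partial D^2$ equals $-\nabla\beta=-\nabla^\perp G_\omega$ for a harmonic conjugate $G_\omega$; unwinding \eqref{eq: def-Neumann-distribution}, condition \eqref{eq: bdry-condition-g} forces $G_\omega$ to be constant on $\partial\omega\cap\partial D^2$ for every such $\omega$. The displayed identity of the first paragraph holds for every admissible $\omega$ and $f$ — it uses only $u\in C^1\cap W^{2,1}$, conformality, the Lagrangian condition, and $\operatorname{div}(i\overline{g}\nabla g)=0$, so branch and singular points cause no trouble — and its right-hand side vanishes: on $\partial\omega\cap D^2$ both $f\circ u$ and $\nabla f(u)$ vanish; on $\partial\omega\cap\partial D^2$ one has $\langle\partial_\nu u,I\nabla f(u)\rangle=q\,\langle N(u),I\nabla f(u)\rangle=0$ by \eqref{eq: orthogonality-prop} and $I\nabla f\cdot N=0$; and $\int_\omega\nabla\beta\cdot\nabla(f\circ u)=-\int_{\partial\omega\cap\partial D^2}G_\omega\,\partial_\tau(f\circ u)=0$ since $G_\omega$ is constant on that arc while $f\circ u$ vanishes at its endpoints. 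Hence \eqref{eq: hsllp} holds, i.e. $u$ is $\Omega$-free boundary Hamiltonian stationary with the localization property.
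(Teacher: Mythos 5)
Your overall architecture is the same as the paper's: integrate \eqref{eq: hsllp} by parts using $\operatorname{div}(g\nabla u)=0$ (this is where $W^{2,1}$ enters), keep the boundary terms to get the identity $\int_{\gamma}\langle\partial_\nu u, I\nabla f(u)\rangle=\int_\gamma G_\omega\,\partial_\tau(f\circ u)$ for admissible $f$, decode it algebraically using the Lagrangian, conformality, tangency and the Legendrian hypothesis, and reverse the computation for the converse. The converse and the algebraic decoding ($p=0$ from the Lagrangian condition, $r=0$ from the variational information, hence $\partial_\nu u\parallel N\circ u$) are fine and agree with the paper.

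The genuine gap is the step you yourself flag as "the main obstacle": the claim that admissible $f$ realise an \emph{arbitrary} $\varphi_0=f\circ u$ along $\gamma$ (and an arbitrary transverse component $Z$). Since $u$ is only $C^1$, the curve $u(\gamma)$ is merely a $C^1$ submanifold, and a smooth function on $\mathbb{C}^2$ cannot in general be prescribed freely along it: the tangential derivative of $f\circ u$ has the constrained form $\langle\nabla f(u),\partial_\tau u\rangle$ with $\nabla f\circ u$ Lipschitz in arclength, so its modulus of continuity is tied to that of $\partial_\tau u$, and the class of achievable $\partial_\tau(f\circ u)$ is a proper subset of the zero-mean functions you feed into the fundamental lemma. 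A flow box for $IN$ only disposes of the constraint $\nabla f\cdot IN=0$; it does not let you prescribe values on the (only $C^1$) projected curve in the leaf space. The paper explicitly notes that this naive prescription "fails when $u$ is only of class $C^1\cap W^{2,1}$" and replaces it by the following device: \emph{after} having proved \eqref{eq: orthogonality-prop} (so that $\partial_\tau u$ is parallel to $g_0J(N\circ u)$ along the boundary), it takes the flow $\Psi$ of the smooth field $g_0JN$, uses test functions of the form $\tilde\beta\circ\Psi^{-1}$ depending only on the flow time $t$ (hence genuinely smooth and admissible), and then recovers "$G_\omega$ constant" from $\int\alpha(t)\,G_\omega\,dt=0$ via the bi-Lipschitz projection $\pi$ onto the $t$-axis, which exists precisely because of the transversality condition $\frac{\partial_\tau u}{|\partial_\tau u|}\cdot g_0JN>\tfrac12$. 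Your proposal contains no substitute for this construction, so the key conclusion that $G_\omega$ is constant on $\overline\omega\cap\partial D^2$ (i.e. \eqref{eq: bdry-condition-g}) is not established. A similar, milder overclaim occurs in your derivation of the first relation in \eqref{eq: plan-A}: you only ever use the $N$-component of $Z$, and that case should be obtained as in the paper, by taking $f\equiv 0$ on $\partial\Omega$ with $\nabla f=\phi\,N$ there for a smooth $\phi$ on $\partial\Omega$ and running a sign (measure-zero) argument, since the coefficient along $\gamma$ is then $\phi\circ u$ rather than an arbitrary function of the boundary parameter.
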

\begin{proof}
    Assume first that $u$ is $\Omega$-free boundary Hamiltonian stationary Lagrangian with the localization property and that \eqref{eq: assumption-Legendrian} holds.
    Let $\omega\subset \overline{D^2}$ be a relatively open, simply connected, smooth domain such that $\overline{\omega}$ does not contain any singular or branch point of $u$. Then $g$ is continuous on $\overline{\omega}$.
    Let $f$ be a smooth function on $\mathbb{C}^2$ supported away from $u(\partial \omega \cap D^2)$ and such that $I\nabla f(x)\cdot N(x)=0$ for any $x$ in a neighbourhood of $u(\overline{\omega}\cap\partial D^2)$. Then we have
    \begin{align}\label{eq: computation-HSlp}
        0=&\int_{\omega}\langle du; d(I(\nabla f)\circ u)\rangle=\int_{\overline{\omega}\cap\partial D^2}\langle\partial_\nu u, I(\nabla f)\circ u\rangle+\int_{\omega}\langle \overline{g}dg\cdot d u, I(\nabla f)\circ u\rangle\\
        \nonumber
        =&\int_{\overline{\omega}\cap\partial D^2}\langle\partial_\nu u, I(\nabla f)\circ u\rangle-\int_\omega i\overline{g}dg\cdot d(f\circ u)\\
        \nonumber
        =&\int_{\overline{\omega}\cap\partial D^2}\langle\partial_\nu u, I(\nabla f)\circ u\rangle-\int_{\partial \omega}G_\omega \partial_\tau (f\circ u),
    \end{align}
    where the second equality follows from \eqref{eq: structural-equation} and the function $G_\omega$ in the last expression is the one introduced in Remark \ref{rem: Neumann-distribution}.
    Notice that by the assumptions on $f$, in the second term of the right hand side the integrand vanishes on $\partial \omega\cap D^2$.
    Thus for
    $\sigma:=\partial D^2\cap\overline{\omega}$,
    \begin{align} \label{eq: boundary-identity-I}
        \int_{\sigma} \langle\partial_{\nu}u, I(\nabla f)\circ u\rangle=\int_\sigma G_\omega\partial_\tau (f\circ u)
    \end{align}
    for any smooth $f$ supported away from $u(\partial\sigma)$ and such that $I\nabla f(x)\cdot N(x)=0$ for any $x$ in a neighbourhood of $u(\overline{\omega}\cap\partial D^2)$. In particular, \eqref{eq: boundary-identity-I} holds for any segment $\sigma\subset \partial D^2$ (for $f$ as above).\\
We claim that on $\partial D^2$ there holds $\partial_\nu u\cdot I(N\circ u)=0$.
To see this, let
\begin{align}
    \Sigma_+:=\left\{x\in \partial D^2\vert \partial_\nu u(x)\cdot IN(u(x))>0\right\}.
\end{align}
Notice that $\Sigma_+$ is an open subset of $\partial D^2$.
Moreover, for any $n\in \mathbb{N}$ set
\begin{align}
    \Sigma_+^\frac{1}{n}:=\left\{x\in \Sigma_+\vert\operatorname{dist}(x,\partial\Sigma_+)>\frac{1}{n}\right\}.
\end{align}
For any $n\in \mathbb{N}$ let $\varphi_n$ be a smooth non-negative function supported on $\Sigma_+$ and such that $\varphi_n\equiv 1$ on $\Sigma_+^\frac{1}{n}$ and let $f_n$ be a smooth function defined on $\mathbb{C}^2$, supported away from $u(\partial\omega\cap D^2)$ and such that
\begin{align}
    \begin{cases}
        f_n\equiv 0&\text{ on }\partial\Omega\\
        \nabla f_n(x)=\varphi_n(x)N(x)&\text{ on }\partial\Omega.
    \end{cases}
\end{align}
Then \eqref{eq: boundary-identity-I} implies that for any segment $\sigma$ as above
\begin{align}
    \int_{\sigma\cap \Sigma_+^\frac{1}{n}}\lvert \partial_\nu u\cdot I(N\circ u)\rvert\leq \int_{\sigma\cap\Sigma_+} \langle\partial_{\nu}u, I(\nabla f_n)\circ u\rangle=0.
\end{align}
As this holds for any $n\in \mathbb{N}$ and $\Sigma_+=\bigcup_{n\in \mathbb{N}}\Sigma_+^\frac{1}{n}$, we conclude that $\Sigma_+$ has measure zero. Similarly one can show that the set of points $x$ in $\partial D^2$ where $\partial_\nu u(x)\cdot IN(u(x))<0$ also has measure zero. This proves the claim.\\
Notice that by assumption \eqref{eq: assumption-Legendrian},
\begin{align*}
    \left\{N(u(x)), IN(u(x)), \partial_\tau u(x), I\partial_\tau u(x)\right\}
\end{align*}
is an orthogonal basis of $T_{u(x)}\mathbb{C}^2$ for any $x\in \partial D^2$.
For any $x\in \partial D^2$, $IN(u(x))\cdot\partial_\nu u(x)=0$ by the previous claim, $I\partial_\tau u\cdot\partial_\nu u=0$ as $u$ is Lagrangian and $\partial_\tau u\cdot\partial_\nu u=0$ as $u$ is conformal.
We conclude that $\partial_\tau u(x)\wedge N(u(x))=0$, this shows \eqref{eq: orthogonality-prop}.

Next we claim that $i\overline{g}\partial_{\nu}g=0$ on $\partial D^2$.
We will show that for any $x\in \partial D^2$, there exists $\omega$ as in \eqref{eq: boundary-identity-I} such that $\overline{\omega}$ contains a neighbourhood of $x$ in  $\partial D^2$ and $G_\omega$ is locally constant on $\partial D^2\cap \overline{\omega}$.\\
Assume first that $u$ is smooth. Notice that on $\partial D^2$
\begin{align}
    \partial_\tau u\cdot I(N\circ u)=-\overline{g}J\partial_\nu u\cdot I(N\circ u)=0
\end{align}
by \eqref{eq: orthogonality-prop}.
For any $x\in \partial D^2$ let $\omega$ be a domain as above with $x\in \omega$, on which $u$ is injective and such that $\overline{\omega}\cap \partial D^2$ is connected.
Let $\alpha$ be a smooth function on $\overline{\omega}\cap \partial D^2$ supported away from $\partial(\overline{\omega}\cap\partial D^2)$ and such that $\int_{\overline{\omega}\cap \partial D^2}\alpha=0$. Then there exists a smooth function $f$ on $\mathbb{C}^2$ such that $\nabla f\cdot IN=0$ in a neighbourhood of $u(\overline{\omega}\cap\partial D^2)$ in $\partial \Omega$ and $\partial_\tau(f\circ u)=\alpha$ on $\overline{\omega}\cap\partial D^2$. Plugging $f$ in \eqref{eq: boundary-identity-I} and varying $\alpha$ we obtain that $G_\omega$ is constant on $\overline{\omega}\cap\partial D^2$.
This argument, however, fails when $u$ is only of class $C^1\cap W^{2,1}(\overline{D}^2, \mathbb{C}^2)$.\\
For the general case we proceed as follows.
Given $x\in \partial D^2$ set $p:=u(x)$, $v=\frac{\partial_\tau u(x)}{\lvert\partial_\tau u(x)\rvert}$. Let $g_0\in \mathbb{S}^1\subset\mathbb{C}$ such that $v=g_0JN(p)$ (recall that $\partial_\tau u(x)\cdot N(u(x))=0$, $\partial_\tau u(x)\cdot IN(u(x))=0$ on $\partial D^2$). Let $P$ be the hyperplane passing through $p$ and orthogonal to $v$. Extend $N$ to a smooth vector field in a neighbourhood of $p$ (still denoted by $N$). Notice that for $\varepsilon>0$ sufficiently small, the flow of $g_0JN$ induces a diffeomorphism $\Psi$ from $B^P_\varepsilon(p)\times(-\delta, \delta)$ to a neighbourhood $U$ of $p$ (where $B^P_\varepsilon(p)$ denotes a ball in the plane $P$). If $\delta$ is chosen sufficiently small, $\Psi$ satisfies
\begin{enumerate}
    \item $\partial_t\Psi=g_0J(N\circ\Psi)$ ($t$ denotes the coordinate in $(-\delta, \delta)$);
    \item on the connected component $\sigma$ of $u^{-1}(U)\cap\partial D^2$ containing $x$, there holds
    \begin{align}
        \frac{\partial_\tau u(y)}{\lvert \partial_\tau u(y)\rvert}\cdot g_0JN>\frac{1}{2};
    \end{align}
    \item there holds
    \begin{align}
        (\Psi^{-1}\circ u)(\partial \sigma)\subset B^P_\varepsilon (p)\times\{-\delta, \delta\}.
    \end{align}
\end{enumerate}
Let $\omega$ be a smooth, simply connected, relatively open subset of $\overline{D^2}$ which contains no singular or branch point of $u$ and such that $x\in \omega$ and $\omega\cap\partial D^2=\sigma$.
For any $\alpha\in C^\infty_c((-\delta, \delta))$ with $\int_{-\delta}^\delta\alpha=0$ let $\beta\in C^\infty_c((-\delta, \delta))$ such that $\beta'=\alpha$. Set
\begin{align}
    \tilde\beta: B^P_\varepsilon(p)\times(-\delta, \delta)\to \mathbb{R}, \quad (x,t)\mapsto \beta(t).
\end{align}
Notice that on $U$
\begin{align}\label{eq: admissibility-test-fct}
    \nabla(\tilde\beta\circ\Psi^{-1})(y) \wedge g_0JN(y)=0.
\end{align}
In particular since $\partial_\nu u\wedge (N\circ u)=0$ on $\partial D^2$,
\begin{align}\label{eq: LHS-vanishes}
    \langle \partial_\nu u,I\nabla (\tilde{\beta}\circ \Psi^{-1})\circ u\rangle=0\text{ on } \sigma.
\end{align}
Let $\eta$ be a smooth cut-off function supported in $U$ and such that $\eta\equiv 1$ in a neighbourhood of
\begin{align*}
    \overline{\{x\in u(\sigma)\vert \tilde\beta\circ\Psi^{-1}(x)\neq 0\}}.
\end{align*}
By \eqref{eq: admissibility-test-fct} there holds
\begin{align}
    I\nabla(\eta(\tilde{\beta} \circ\Psi^{-1}))\cdot N=0
\end{align}
in a neighbourhood of $u(\sigma)$. Notice also that $\eta(\tilde{\beta} \circ\Psi^{-1})$ is a smooth, compactly supported function on $\mathbb{C}^2$, supported away from $u(\partial \sigma)$.
Thus by \eqref{eq: boundary-identity-I} and \eqref{eq: LHS-vanishes}
\begin{align}\label{eq: integral-G-zero}
    \int_{\partial D^2}\partial_\tau(\tilde{\beta}\circ\Psi^{-1}\circ u) G_\omega=\int_{\partial D^2}\partial_\tau((\eta(\tilde{\beta}\circ\Psi^{-1}))\circ u)  G_\omega=0.
\end{align}
We compute
\begin{align}
    \partial_\tau (\tilde{\beta}\circ\Psi^{-1}\circ u)=\partial_t\tilde{\beta}(\Psi^{-1}\circ u)\left\langle \left(\nabla\Psi^{-1}\right)\circ u\cdot \partial_\tau u, \frac{\partial}{\partial t}\right\rangle,
\end{align}
where $\cdot$ denotes the scalar product in the tangent bundle of $U$, while $\langle\cdot,\cdot\rangle$ denotes the scalar product in the tangent bundle of $B_\varepsilon^P(p)\times\{-\delta, \delta\}$.
Notice that the map
\begin{align}
    \pi: B^P_\varepsilon(p)\times(-\delta, \delta)\to\{p\}\times(-\delta, \delta),\quad (x,t)\mapsto (p,t)
\end{align}
is bi-Lipschitz when restricted to $(\Psi^{-1}\circ u)(\sigma)$
and
\begin{align}
    J\left(\Psi\circ\left(\pi\vert_{(\Psi^{-1}\circ u)(\sigma)}\right)^{-1}\right)=&\left(J\left(\pi\circ\left(\Psi\vert_{u(\sigma)}\right)^{-1}\right)\right)^{-1}\\
    \nonumber
    =&\left(\left\langle \left(\nabla\Psi^{-1}\right)\circ u\cdot \partial_\tau u, \frac{\partial}{\partial t}\right\rangle\right)^{-1}.
\end{align}
Therefore by means of a change of variable we can rewrite \eqref{eq: integral-G-zero} as
\begin{align}\label{eq: final-integral-G-zero}
    \int_{-\delta}^\delta \alpha(t)(G_\omega\circ u^{-1}\circ \Psi\circ(\pi\vert_{(\Psi^{-1}\circ u)^{-1})(\sigma)})(p, t)\, dt.
\end{align}
As \eqref{eq: final-integral-G-zero} holds for any smooth function $\alpha$ with zero average, we conclude that $G_\omega$ is constant on $\overline{\omega}\cap\partial D^2$. As observed at the end of Section 2, this implies $i\overline{g}\partial_\nu g=0$.\\

On the other hand assume that $g$ lies in $W^{1,1}(D^2)$ and satisfies $\operatorname{div}(i\overline{g}\nabla g)=0$ in $D^2$. Let $\omega\subset \overline{D^2}$ be a smooth, relatively open subset and let $f$ be a smooth function on $\mathbb{C}^2$ supported away from $u(\partial \omega \cap D^2)$ and such that $I\nabla f(x)\cdot N(x)=0$ for any $x$  in a neighbourhood of $u(\overline{\omega}\cap\partial D^2)$ in $\partial \Omega$. By computation \eqref{eq: computation-HSlp} we have
\begin{align}
    \int_{\omega}\langle du; d(I(\nabla f)\circ u)\rangle=\int_{\partial D^2\cap \omega}\langle \partial_\nu u, I(\nabla f)\circ u\rangle-\int_{\partial \omega}G_\omega \partial_\tau (f\circ u),
\end{align}
where $G_\omega$ is the function introduced in Remark \ref{rem: Neumann-distribution}. Notice that by the assumptions on $f$, in the second term of the right hand side the integrand vanishes on $\partial \omega\cap D^2$.
If $u$ satisfies \eqref{eq: orthogonality-prop} and \eqref{eq: bdry-condition-g}, the terms on the right hand side vanish (for the first term we use the fact that $I\nabla f(u(x))\cdot N(u(x))=0$ for any $x\in u(\overline{\omega}\cap\partial D^2)$, for the second we use the fact that $G_\omega$ is locally constant on $\overline{\omega}\cap \partial D^2$ by \eqref{eq: bdry-condition-g}), so that $u$ is $\Omega$-free boundary Hamiltonian stationary with the localization property.
\end{proof}
Theorem \ref{thm: HSLFB-implies-minimal} is a consequence of Theorem \ref{thm: properties of HSL finite sing}:
\begin{cor}\label{cor: lag-min-implies-min}
    Under the assumptions of Theorem \ref{thm: properties of HSL finite sing}, if we assume that the Lagrangian angle $\overline{g}$ is continuous, then $\overline{g}$ is constant and $u$ is a calibrated, branched $\Omega$-free boundary minimal immersion.
\end{cor}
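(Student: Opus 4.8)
The plan is to prove that the Lagrangian angle $\overline g$ is constant; the remaining assertions then follow at once. Since $\overline g$ is assumed continuous, $u$ has no singular points (Definition \ref{def: singular-branch}), hence it is a weakly conformal branched immersion away from finitely many branch points in $D^2$, and by the part of Lemma \ref{lem: properties-of-g} dealing with maps without singularities there is a function $\beta$, harmonic in $D^2$ and continuous on $\overline{D^2}$, with $g=e^{i\beta}$. Granting that $\beta$ (equivalently $\overline g=e^{-i\beta}$) is constant, \eqref{eq: structural-equation} reduces to $\Delta u=0$, so $u$ is a weakly conformal, harmonic $C^1$ branched immersion, that is, a calibrated, branched minimal immersion by the discussion in Section \ref{sec: preliminaries}; and \eqref{eq: orthogonality-prop} of Theorem \ref{thm: properties of HSL finite sing} says precisely that $u(D^2)$ meets $\partial\Omega$ orthogonally along $u(\partial D^2)$, so $u$ is $\Omega$-free boundary minimal.

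First I would record that $i\overline g\nabla g=-\nabla\beta$, and that $\nabla\beta\in L^1(D^2)$: in the interior $\beta$ is smooth, while near $\partial D^2$ Remark \ref{rem: Neumann-distribution} gives $\nabla\beta=\nabla^\perp G_{\omega_i}$ with $G_{\omega_i}\in W^{1,1}(\omega_i)$. Then I would convert the boundary identity \eqref{eq: bdry-condition-g} of Theorem \ref{thm: properties of HSL finite sing} into a global weak Neumann condition for $\beta$: inserting $i\overline g\nabla g=-\nabla\beta$ into the definition \eqref{eq: def-Neumann-distribution} of $i\overline g\partial_\nu g$, adding the interior contribution $\int_{\omega_0}\nabla\beta\cdot\nabla(\tilde\varphi\psi_0)=0$ (which vanishes since $\tilde\varphi\psi_0\in C^1_c(D^2)$ and $\beta$ is harmonic), and using $\sum_i\psi_i\equiv1$, one obtains
\begin{align*}
    \int_{D^2}\nabla\beta\cdot\nabla\psi=0\qquad\text{for every }\psi\in C^1(\overline{D^2}).
\end{align*}

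Finally I would deduce that $\beta$ is constant by a reflection argument. Writing $\iota(z):=1/\overline z$, let $\beta^\ast$ be the even reflection of $\beta$ across $\partial D^2$, i.e.\ $\beta^\ast:=\beta$ on $\{\lvert z\rvert\le1\}$ and $\beta^\ast:=\beta\circ\iota$ on $\{\lvert z\rvert>1\}$. Then $\beta^\ast$ is continuous on $\mathbb{C}$ and bounded, since $\beta$ is bounded on the compact set $\overline{D^2}$ and $\iota$ maps $\{\lvert z\rvert\ge1\}$ into $\overline{D^2}$. For $\psi\in C^\infty_c(\mathbb{C})$, changing variables via the anti-conformal involution $\iota$ — which preserves the Dirichlet integral — rewrites $\int_{\mathbb{C}}\nabla\beta^\ast\cdot\nabla\psi$ as $\int_{D^2}\nabla\beta\cdot\nabla\bigl(\,\bigl(\psi+\psi\circ\iota\bigr)|_{\overline{D^2}}\,\bigr)$, and this vanishes by the displayed weak Neumann condition since $\bigl(\psi+\psi\circ\iota\bigr)|_{\overline{D^2}}\in C^1(\overline{D^2})$. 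Thus $\beta^\ast$ is weakly harmonic on $\mathbb{C}$, hence harmonic by Weyl's lemma, hence constant by Liouville's theorem; in particular $\overline g$ is constant.

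The step I expect to be the main obstacle is this last one: a priori $\beta$ is only continuous on $\overline{D^2}$ and the Neumann condition holds merely distributionally, so the naive energy identity (testing the weak formulation with $\psi=\beta$) is unavailable, there being no reason for $\beta$ to lie in $W^{1,2}(D^2)$. The reflection principle is invoked precisely to bypass this, turning the distributional Neumann condition into genuine harmonicity across $\partial D^2$, after which Liouville's theorem finishes the argument.
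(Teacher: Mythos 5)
Your argument is correct and follows essentially the same route as the paper: continuity of $\overline{g}$ rules out singular points so Lemma \ref{lem: properties-of-g} gives $g=e^{i\beta}$ with $\beta$ harmonic, the vanishing of the distribution $i\overline{g}\partial_\nu g$ from Theorem \ref{thm: properties of HSL finite sing} yields $\int_{D^2}\nabla\beta\cdot\nabla\varphi=0$ for all $\varphi\in C^1(\overline{D^2})$, hence $\beta$ is constant, and then \eqref{eq: structural-equation} plus \eqref{eq: orthogonality-prop} give the calibrated, branched $\Omega$-free boundary minimal immersion. The only difference is that where the paper simply asserts ``which implies that $\beta$ is constant,'' you supply an explicit justification (even reflection across $\partial D^2$, Weyl's lemma and Liouville) that correctly bypasses the missing $W^{1,2}$ bound on $\beta$, which is a legitimate and welcome elaboration of the same approach rather than a different proof.
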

\begin{proof}
    By Lemma \ref{lem: properties-of-g} the Lagrangian angle $\overline{g}$ of $u$ can be written as $\overline{g}=e^{-i\beta}$ for some harmonic function $\beta$ on $D^2$. By \eqref{eq: bdry-condition-g} in Theorem \ref{thm: properties of HSL finite sing}, $i\overline{g}\partial_\nu g=0$ on $\partial D^2$ in the sense described in Remark \ref{rem: Neumann-distribution}.
    We deduce that $\int_{D^2}\nabla \beta\cdot\nabla \varphi=0$ for any $\varphi\in C^1(\overline{D^2})$, which implies that $\beta$ is constant on $D^2$,
    say equal to $\beta_0\in \mathbb{R}$. Then $u$ is special Lagrangian. As $u$ is weakly conformal and satisfies $\Delta u=0$ (by \eqref{eq: structural-equation}), it is a branched minimal immersion. Moreover by Theorem \ref{thm: properties of HSL finite sing} it satisfies $\partial_\nu u\wedge (N\circ u)=0$ on $\partial D^2$, so that $u(\overline{D^2})$ is orthogonal to $\partial \Omega$ along $u(\partial D^2)$. We conclude that $u$ is a calibrated, branched $\Omega$-free boundary minimal immersion. 
\end{proof}
For the special case $\Omega=B_1(0)$ we obtain Theorem \ref{thm: main-thm}:
\begin{cor}
    Let $u\in C^1\cap W^{2,1}(\overline{D^2},\mathbb{C}^2)$ be a $B_1(0)$-free boundary Hamiltonian stationary Lagrangian immersion away fromm fiitely many branch points in $D^2$, with the localization property. Assume that the boundary curve satisfies $\partial_\tau u\cdot Iu =0$ (i.e. the boundary curve is Legendrian), then $u(\overline{D^2})$ is a Lagrangian flat equatorial disc (i.e. the intersection of $\overline{B_1(0)}$ with a Lagrangian 2-plane passing through the origin).
\end{cor}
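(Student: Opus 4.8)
\emph{Proof proposal.}
The plan is to reduce the statement to Corollary \ref{cor: lag-min-implies-min} together with the classical rigidity theorem of Nitsche and Fraser--Schoen. First I would observe that for $\Omega = B_1(0)$ the outer unit normal is $N(x) = x$ for $x \in \mathbb{S}^3 = \partial B_1(0)$, so that $N \circ u = u$ on $\partial D^2$; hence the Legendrian hypothesis $\partial_\tau u \cdot Iu = 0$ is exactly condition \eqref{eq: intro-Legendrian-condition} with $\Omega = B_1(0)$. Moreover, since $u$ is assumed to be an immersion away from finitely many \emph{branch} points (and no singular points), its Lagrangian angle $\overline g$ is continuous on $\overline{D^2}$. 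Thus all the hypotheses of Corollary \ref{cor: lag-min-implies-min} are satisfied.

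By Corollary \ref{cor: lag-min-implies-min}, $\overline g$ is constant, $u$ is a calibrated branched minimal immersion, and by \eqref{eq: orthogonality-prop} there holds $(N \circ u) \wedge \partial_\nu u = 0$ on $\partial D^2$. Since $N \circ u = u$, this says precisely that $u(\overline{D^2})$ meets $\mathbb{S}^3$ orthogonally along $u(\partial D^2)$, i.e. $u \colon D^2 \to \overline{B_1^4(0)}$ is a branched minimal immersion of a disc with free boundary on the round $3$-sphere. (That $u$ is a genuine branched minimal immersion, smooth up to the boundary, follows from the fact that after Corollary \ref{cor: lag-min-implies-min} the map $u$ is weakly conformal and harmonic with the free boundary condition $\partial_\nu u \parallel u$, hence real-analytic in $D^2$ with isolated branch points of finite order and smooth up to $\partial D^2$.)

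At this point I would invoke the rigidity theorem of Nitsche \cite{Nitsche} and Fraser--Schoen \cite{Fraser-Schoen}: any branched minimal immersion of $D^2$ into $\overline{B_1^n(0)}$ whose image meets $\partial B_1^n(0)$ orthogonally parametrizes an equatorial plane disc, so that $u(\overline{D^2}) = P \cap \overline{B_1(0)}$ for some $2$-plane $P$ through the origin. Finally, since $u$ is Lagrangian we have $u^\ast \omega = 0$, so $\omega$ vanishes on the tangent plane $P$; therefore $P$ is a Lagrangian $2$-plane and $u(\overline{D^2})$ is a flat equatorial Lagrangian disc, as claimed.

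The bulk of the work has already been carried out in Theorem \ref{thm: properties of HSL finite sing} and Corollary \ref{cor: lag-min-implies-min}; here the only points to verify are the identification $N \circ u = u$, which turns the free boundary condition for $B_1(0)$ into orthogonal intersection with $\mathbb{S}^3$, and the fact that $u$ meets the regularity hypotheses of the Nitsche--Fraser--Schoen theorem. I expect the latter — ensuring that the a priori merely $C^1 \cap W^{2,1}$ map $u$ becomes, after one knows $\overline g$ is constant, a bona fide branched minimal immersion to which the classical rigidity statement applies — to be the only mildly delicate step.
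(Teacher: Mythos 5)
Your argument is correct, and in fact the paper itself endorses it in one line at the end of its proof (``alternatively, once we know that $u$ is a branched minimal immersion and satisfies $\partial_\nu u\wedge u=0$ on $\partial D^2$, the result follows from Theorem 2.1 in \cite{Fraser-Schoen} or Theorem 1.1 in \cite{LWW}''). However, the paper's primary proof takes a different, self-contained route after Corollary \ref{cor: lag-min-implies-min}: instead of quoting Nitsche--Fraser--Schoen, it combines the Lagrangian angle relation \eqref{eq: Lag-angle-def} with the orthogonality \eqref{eq: orthogonality-prop} (and $N\circ u=u$) to get $\partial_\tau u=\pm\lvert\partial_\tau u\rvert e^{-i\beta_0}Ju$ on $\partial D^2$, so an arclength parametrization $\gamma$ of the boundary curve solves $\dot\gamma=\pm e^{-i\beta_0}J\gamma$ and is a great circle of $\mathbb{S}^3$; since $\overline{g}$ is constant, \eqref{eq: structural-equation} makes $u$ harmonic, and the maximum principle forces $u(\overline{D^2})$ into the flat disc spanned by that circle, which is automatically Lagrangian. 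The trade-off: your route buys brevity by citation, but its weight rests on verifying that a $C^1(\overline{D^2})\cap W^{2,1}$ weakly conformal harmonic map with pointwise orthogonal boundary contact satisfies the regularity hypotheses of the classical rigidity theorem --- exactly the ``mildly delicate step'' you flag, and which your parenthetical (``hence \dots smooth up to $\partial D^2$'') asserts rather than proves, since boundary smoothness does not follow from interior harmonicity alone but needs a reflection/boundary-regularity argument (or a check that the Hopf-differential proof of Fraser--Schoen goes through at this regularity). The paper's ODE-plus-maximum-principle argument sidesteps this issue entirely and uses the Lagrangian structure directly, which is why it is given as the main proof; your reduction is nonetheless a legitimate alternative, and your preliminary reductions ($N\circ u=u$, continuity of $\overline{g}$ from the absence of singular points, applicability of Corollary \ref{cor: lag-min-implies-min}) coincide with the paper's.
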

\begin{proof}
    Corollary \ref{cor: lag-min-implies-min}  implies that  $u$ has constant Lagrangian angle, say equal to $e^{-i\beta_0}$ for some $\beta_0\in \mathbb{R}$, and is a weakly conformal branched $B_1(0)$-free boundary minimal immersion.\\
    Notice that by \eqref{eq: Lag-angle-def} and \eqref{eq: orthogonality-prop}
    \begin{align}
        \partial_\tau u=\pm\lvert\partial_\tau u\rvert e^{-i\beta_0}Ju\text{ on }\partial D^2.
    \end{align}
    Let $\gamma: [0,L)\to\mathbb{S}^3$ be an arclength parametrization of the curve $u\vert_{\partial D^2}$, then $\gamma$ satisfies $\dot{\gamma}=\pm e^{-i\beta_0}J\gamma$ and thus 
    parametrizes a great circle in $\mathbb{S}^3$ (i.e. the intersection of $\mathbb{S}^3$ with a 2-plane passing through the origin).
    Since $u$ satisfies \eqref{eq: structural-equation} with constant Lagrangian angle, $u$ is harmonic; thus by the maximum principle it takes values in the closed disc $\overline{D}$ spanned by $\gamma$. We conclude that $u(\overline{D^2})$ is a Lagrangian flat equatorial disc in $B_1(0)$.\\
    Alternatively, once we know that $u$ is a branched minimal immersion and satisfies $\partial_\nu u\wedge u=0$ on $\partial D^2$, the result follows from Theorem 2.1 in \cite{Fraser-Schoen} or Theorem 1.1 in \cite{LWW}. 
\end{proof}

\section{Examples of free boundary Hamiltonian stationary Lagrangian discs}
\subsection{Schoen-Wolfson cones}\label{section: SW}
In this section we show that Schoen-Wolfson cones are $B_1(0)$-free boundary Hamiltonian stationary Lagrangian surfaces.
This implies that the rigidity result of Theorem \ref{thm: main-thm} doesn't hold if we do not assume that the Lagrangian angle $\overline{g}$ is continuous.\\
Recall that for any relatively prime positive integers $p$ and $q$, the Schoen-Wolfson cone $\Sigma_{p,q}$--introduced in \cite{SW}-- 
has the following weakly conformal parametrization:
\begin{align}\label{eq: Phi-p-q}
\Phi_{p,q}:\overline{D^2}\to \mathbb{C}^2,\quad r e^{i\theta}\mapsto \frac{r^{\sqrt{pq}}}{\sqrt{p+q}} \begin{pmatrix}\sqrt{q}e^{ip\theta}
\\ i\sqrt{p}e^{-iq\theta}\end{pmatrix}.
\end{align}
The derivatives of $\Phi_{p,q}$ are given by
\begin{align}\label{eq: Phi-p-q-der}
\partial_r\Phi_{p,q}(r,\theta)=\frac{\sqrt{pq} \,r^{\sqrt{pq}-1}}{\sqrt{p+q}} \begin{pmatrix}\sqrt{q}e^{ip\theta}
\\ i\sqrt{p}e^{-iq\theta}\end{pmatrix},
\quad
\partial_\theta\Phi_{p,q}(r,\theta)=\frac{\sqrt{pq}\,r^{\sqrt{pq}}}{\sqrt{p+q}} \begin{pmatrix} i\sqrt{p}e^{ip\theta}
\\ \sqrt{q}e^{-iq\theta}
\end{pmatrix},
\end{align}
thus $\Phi_{p,q}$ is a Lagrangian map and its Lagrangian angle is given by
\begin{align}\label{eq: lag-angle-Phi-p-q}
    \frac{dz_1\wedge dz_2(\partial_r\Phi_{p,q}, \frac{1}{r}\partial_\theta\Phi_{p,q})}{\lvert\partial_r\Phi_{p,q}\wedge\frac{1}{r}\partial_\theta\Phi_{p,q}\rvert}=e^{i(p-q)\theta}.
\end{align}
\begin{lem}\label{lem: SW-cones}
    For any relatively prime positive integers $p,q$, the map $\Phi_{p,q}$ is $B_1(0)$-free boundary Hamiltonian stationary Lagrangian with the localization property.
\end{lem}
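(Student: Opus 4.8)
The plan is to deduce the statement from the converse direction of Theorem~\ref{thm: properties of HSL finite sing}, applied with $\Omega = B_1(0)$. Concretely, I would check that $\Phi_{p,q}$ satisfies the standing hypotheses of that theorem (namely, it is a weakly conformal $C^1\cap W^{2,1}$ Lagrangian immersion of $\overline{D^2}$ away from finitely many branch and singular points, with a well-defined Lagrangian angle $\overline g$), and that in addition $\overline g\in W^{1,1}(D^2)$, $\operatorname{div}(i\overline g\nabla g)=0$ in $D^2$, $(N\circ\Phi_{p,q})\wedge\partial_\nu\Phi_{p,q}=0$ on $\partial D^2$ (i.e. \eqref{eq: orthogonality-prop}), and $i\overline g\partial_\nu g=0$ on $\partial D^2$ in the sense of \eqref{eq: def-Neumann-distribution} (i.e. \eqref{eq: bdry-condition-g}). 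Here, by \eqref{eq: lag-angle-Phi-p-q}, the Lagrangian angle is $\overline g=e^{i(p-q)\theta}$, so $g=e^{-i(p-q)\theta}$.

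First I would verify the regularity and structure from the explicit formulas \eqref{eq: Phi-p-q}--\eqref{eq: Phi-p-q-der}: $\Phi_{p,q}$ is real-analytic on $\overline{D^2}\setminus\{0\}$, weakly conformal and Lagrangian (as recorded above), an immersion on $\overline{D^2}\setminus\{0\}$, and satisfies $\lvert\nabla\Phi_{p,q}\rvert\sim r^{\sqrt{pq}-1}$ and $\lvert\nabla^2\Phi_{p,q}\rvert\sim r^{\sqrt{pq}-2}$; since $\sqrt{pq}\ge 1$ and $r^{\sqrt{pq}-2}\in L^1(D^2)$, this gives $\Phi_{p,q}\in C^1\cap W^{2,1}(\overline{D^2},\mathbb{C}^2)$. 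By \eqref{eq: lag-angle-Phi-p-q} the Lagrangian angle $e^{i(p-q)\theta}$ is discontinuous at $0$ precisely when $p\neq q$, so $0$ is a singular point in that case (when $p=q$ one necessarily has $p=q=1$, and then $\Phi_{1,1}$ is an affine Lagrangian embedding for which $0$ is a regular point); thus $\Phi_{p,q}$ is a weakly conformal Lagrangian immersion away from at most one singular point. Finally $\lvert\nabla\overline g\rvert=\lvert p-q\rvert/r\in L^1(D^2)$, so $\overline g\in W^{1,1}(D^2)$.

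Next I would verify the three remaining conditions. Since $g=e^{-i(p-q)\theta}$, a direct computation gives $i\overline g\nabla g=(p-q)\nabla\theta=(p-q)\nabla^\perp\log\lvert z\rvert$, using that $\arg z$ and $\log\lvert z\rvert$ are locally conjugate harmonic; as $\log\lvert z\rvert\in W^{1,1}(D^2)$ and $\operatorname{div}\circ\nabla^\perp$ annihilates every distribution, $\operatorname{div}(i\overline g\nabla g)=0$ in all of $D^2$. On $\partial D^2=\{r=1\}$, \eqref{eq: Phi-p-q-der} gives $\partial_\nu\Phi_{p,q}=\partial_r\Phi_{p,q}=\sqrt{pq}\,\Phi_{p,q}$, while $\lvert\Phi_{p,q}\rvert\equiv 1$ there and the outer normal of $B_1(0)$ at $\xi\in\mathbb{S}^3$ is $N(\xi)=\xi$; hence $(N\circ\Phi_{p,q})\wedge\partial_\nu\Phi_{p,q}=0$. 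For \eqref{eq: bdry-condition-g}, note that $g=e^{-i(p-q)\theta}$ does not depend on $r$; equivalently, on a simply connected neighbourhood $\omega$ of a boundary point the harmonic conjugate $G_\omega$ of Remark~\ref{rem: Neumann-distribution} equals $-(p-q)\log\lvert z\rvert$ up to an additive constant, hence is constant on $\overline\omega\cap\partial D^2$, so by the criterion recorded at the end of Section~\ref{sec: preliminaries} we get $i\overline g\partial_\nu g=0$. With all the hypotheses of the converse part of Theorem~\ref{thm: properties of HSL finite sing} in place, $\Phi_{p,q}$ is $B_1(0)$-free boundary Hamiltonian stationary Lagrangian with the localization property.

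I do not expect a serious obstacle: the argument is essentially a substitution of \eqref{eq: Phi-p-q}--\eqref{eq: lag-angle-Phi-p-q} into the conditions of Theorem~\ref{thm: properties of HSL finite sing}. The one point that deserves care is the identity $\operatorname{div}(i\overline g\nabla g)=0$ across the singularity at the origin: a priori one might expect a Dirac mass there, and the point is that $i\overline g\nabla g$ is a constant multiple of the perpendicular gradient $\nabla^\perp\log\lvert z\rvert$, whose distributional divergence vanishes identically---the Dirac mass at the origin appears instead in $\operatorname{div}(i\overline g\nabla^\perp g)=2\pi(q-p)\delta_0$, consistently with \eqref{eq: EL-W11}.
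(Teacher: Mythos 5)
Your proposal is correct and follows essentially the same route as the paper: invoke the converse direction of Theorem \ref{thm: properties of HSL finite sing}, noting that $i\overline g\nabla g$ is a perpendicular gradient of a multiple of $\log r$ (so $g$ is $\mathbb{S}^1$-harmonic), and verify the regularity, the orthogonality condition \eqref{eq: orthogonality-prop} and the Neumann condition \eqref{eq: bdry-condition-g} directly from the explicit formulas \eqref{eq: Phi-p-q}--\eqref{eq: lag-angle-Phi-p-q}. You simply carry out in detail the boundary verifications that the paper leaves as a direct check (your sign for $\nabla^\perp\log(r^{p-q})$ differs from the paper's only by the orientation convention for $\nabla^\perp$, which is immaterial).
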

\begin{proof}
    First we notice that the Lagrangian angle $\overline{g}=e^{i(p-q)\theta}$ is $\mathbb{S}^1$-harmonic: observe that $i\overline{g}\nabla g=-\nabla^\perp \log (r^{p-q})$, therefore
    \begin{align}
      \operatorname{div}(i\overline{g}\nabla g)=-\operatorname{div}(\nabla^\perp \log (r^{p-q})) =0.
    \end{align}
    We also notice that $\Phi_{p,q}$ lies in $C^1\cap W^{2,1}(\overline{D^2})$, $\overline{g}$ lies in $ W^{1,(2,\infty)}$ and is smooth outside of the origin, and $\Phi_{p,q}(\partial D^2)\subset \mathbb{S}^3$.
    Therefore, in view of Theorem \ref{thm: properties of HSL finite sing}, in order to show that $\Phi_{p,q}$ is a $B_1(0)$-free boundary Hamiltonian stationary Lagrangian map with the localization property, it is enough to check conditions \eqref{eq: orthogonality-prop} and and \eqref{eq: bdry-condition-g}, i.e. that $i\overline{g}\partial_\nu g=0$ and $\partial_\nu\Phi_{p,q}\wedge\Phi_{p,q}=0$ on $\partial D^2$. The conditions can be verified directly from the explicit expressions \eqref{eq: Phi-p-q}, \eqref{eq: Phi-p-q-der} and \eqref{eq: lag-angle-Phi-p-q}.
\end{proof}

\subsection{Free boundary Hamiltonian stationary Lagrangian surfaces with non-constant angle}
The next result shows that if we remove the assumption that $u$ satisfies $\partial_\tau u\cdot I(N\circ u)=0$ on $\partial D^2$ from Theorem \ref{thm: HSLFB-implies-minimal}, then the Theorem might fail. The construction of the following example is based on observations made in \cite{GOR}.
\begin{lem}
    There exist smooth, conformal, Lagrangian maps with continuous Lagrangian angle which are $\Omega$-free boundary Hamiltonian stationary with the localization property for some smooth domain $\Omega$, but are not minimal.
\end{lem}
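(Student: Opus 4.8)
The plan is to exhibit an explicit example, choosing the domain $\Omega$ only after the map $u$ has been fixed. The basic observation is that if $u$ is a weakly conformal Lagrangian map whose Lagrangian angle $\overline g=e^{-i\beta}$ has $\beta$ harmonic, then $u$ is automatically Hamiltonian stationary with the localization property on every $\omega\Subset D^2$: by \eqref{eq: structural-equation} one has $\int_\omega\langle d(I(\nabla f)\circ u);du\rangle=-\int_\omega i\overline g\,\nabla g\cdot\nabla(f\circ u)$ exactly as in \eqref{eq: computation-Lemma-I}, and $i\overline g\,\nabla g=-\nabla\beta$ is divergence free, so the right-hand side vanishes after one integration by parts (using that $f\circ u\equiv0$ near $\partial\omega$); moreover if $\beta$ is not constant then $u$ is not special Lagrangian, hence not a branched minimal immersion. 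I would therefore take the piece of the Clifford torus
\[
u:\overline{D^2}\to\mathbb C^2,\qquad u(x,y)=\tfrac1{\sqrt2}\bigl(e^{i(x+y)},\,e^{i(x-y)}\bigr),
\]
for which a short direct computation gives $u^\ast\omega=0$, $\langle\partial_x u,\partial_y u\rangle=0$, $|\partial_x u|=|\partial_y u|=1$ and $u^\ast(dz_1\wedge dz_2)=e^{2ix}\,dx\wedge dy$; thus $u$ is a smooth, conformal, Lagrangian immersion with Lagrangian angle $\overline g=e^{2ix}$, i.e.\ $\beta=-2x$ harmonic and non-constant, with no branch or singular points, and $\Delta u=-2u\neq0$ shows $u$ is not minimal.

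Next I would build $\Omega$ along the boundary curve $\gamma:=u\vert_{\partial D^2}$, which is a smooth embedded closed curve in $\mathbb S^3\subset\mathbb C^2$. Let $G:=2y$ be the (globally defined) harmonic conjugate of $\beta$ on $D^2$, so that $-i\overline g\,\nabla g=\nabla\beta=\nabla^\perp G$, and define along $\gamma$
\[
\widetilde N:=G\,(I\partial_\tau u)-\partial_\nu u .
\]
Conformality and the skew-symmetry of $I$ give $\widetilde N\perp\partial_\tau u$ on $\partial D^2$. Moreover \eqref{eq: Lag-angle-def} reads $\partial_\tau u=-\overline g\,J\partial_\nu u$ on $\partial D^2$, whence $I\partial_\tau u=-e^{-i\beta}K\partial_\nu u$ and
\[
\widetilde N=-\bigl(1+G\,e^{-i\beta}K\bigr)\partial_\nu u .
\]
Since $G\in\mathbb R$ and $e^{-i\beta}K=\cos\beta\,K+\sin\beta\,J$ is orthogonal to $1$ in $\mathbb H$, the quaternion $1+G\,e^{-i\beta}K$ has norm $\sqrt{1+G^2}>0$; hence $\widetilde N$ is nowhere zero on $\partial D^2$, with $|\widetilde N|=\sqrt{1+4y^2}$. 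Thus $n:=\widetilde N/|\widetilde N|$ is a smooth unit field along $\gamma$ orthogonal to $\dot\gamma$, and there exists a smooth domain $\Omega$ with $\gamma\subset\partial\Omega$ whose outer unit normal along $\gamma$ is $N=n$ (near $\gamma$ take $\partial\Omega$ to be the ruled hypersurface spanned over $\gamma$ by a smooth orthonormal frame of $\dot\gamma^\perp$ whose first vector is $n$, and extend arbitrarily away from $\gamma$; only the behaviour of $\partial\Omega$ near $\gamma$ will matter).

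It then remains to verify the localization property of Definition \ref{dfn: localization-property} for $(u,\Omega)$. The case $\omega\Subset D^2$ was already settled. For a smooth, relatively open $\omega\subset\overline{D^2}$ meeting $\partial D^2$ and an admissible $f$, the identity \eqref{eq: computation-HSlp} applies (as $\overline g$ is smooth up to $\partial D^2$): taking there $G_\omega=G\vert_\omega$, using that $f\circ u\equiv0$ near the part of $\partial\omega$ contained in $D^2$, and writing $\partial_\tau(f\circ u)=\langle(\nabla f)\circ u,\partial_\tau u\rangle$ together with the skew-symmetry of $I$, one obtains
\[
\int_\omega\langle d(I(\nabla f)\circ u);du\rangle=-\int_{\overline\omega\cap\partial D^2}\bigl\langle(\nabla f)\circ u,\ I\partial_\nu u+G\,\partial_\tau u\bigr\rangle .
\]
By construction $I\partial_\nu u+G\,\partial_\tau u=-I\widetilde N$ is a scalar multiple of $I(N\circ u)$ along $\gamma$, while the admissibility requirement $I\nabla f\cdot N=0$ near $u(\overline\omega\cap\partial D^2)$ says exactly that $(\nabla f)\circ u\perp I(N\circ u)$ there; so the integrand vanishes pointwise and the integral is zero. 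Hence $u$ is $\Omega$-free boundary Hamiltonian stationary with the localization property, it is not minimal, and $\partial_\tau u\cdot I(N\circ u)=-G\,|\partial_\tau u|^2/|\widetilde N|=-2y/\sqrt{1+4y^2}$ is not identically zero on $\partial D^2$, so \eqref{eq: intro-Legendrian-condition} fails --- consistently with Corollary \ref{cor: lag-min-implies-min}. The step I expect to be the main obstacle is the non-vanishing of $\widetilde N$ along $\partial D^2$ --- needed so that $\partial\Omega$ is a genuine smooth hypersurface through $\gamma$ --- which is precisely where \eqref{eq: Lag-angle-def} and the quaternionic structure are used, the key fact being the orthogonality of $e^{-i\beta}K$ to $1$ in $\mathbb H$ (this is also why the same recipe produces such an example starting from any conformal Lagrangian immersion of $D^2$ with non-constant harmonic angle). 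Realizing $\partial\Omega$ as a global smooth domain with the prescribed normal along the embedded curve $\gamma$ is then routine.
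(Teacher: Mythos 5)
Your proposal is correct and takes essentially the same route as the paper: an explicit non-minimal conformal Lagrangian map with harmonic (non-constant) angle is automatically Hamiltonian stationary in the interior, and the domain $\Omega$ is chosen a posteriori so that its outer normal along $u(\partial D^2)$ is parallel to the combination of $\partial_\nu u$ and $G\,I\partial_\tau u$ (the paper's vector field $X$, built from the map $(\overline g,iG)=(e^{-ix},iy)$ rather than your Clifford-torus piece), which kills the boundary term in \eqref{eq: computation-HSlp} for all admissible $f$. The only caveat is the unstated sign convention for $\nabla^\perp$ (hence for the harmonic conjugate, $G=2y$ versus $G=-2y$), which at worst replaces $\widetilde N$ by $-\partial_\nu u-G\,I\partial_\tau u$ without affecting the argument; your quaternionic computation showing $\lvert\widetilde N\rvert=\sqrt{1+G^2}\,\lvert\partial_\nu u\rvert>0$ is a welcome explicit justification of a nondegeneracy the paper leaves implicit when asserting the existence of $\Omega$.
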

\begin{proof}
Let $\varphi: \overline{D^2}\to \mathbb{C}$ be defined by
\begin{align}
    \varphi(r e^{i\theta}):=r\cos(\theta)
\end{align}
and let $g:=e^{i\varphi}$. Then
\begin{align}
    \operatorname{div}(i\overline{g}\nabla g)=0,    
\end{align}
i.e. $g$ is $\mathbb{S}^1$-harmonic. Let
\begin{align}
    G(re^{i\theta}):= r\sin(\theta)
\end{align}
and notice that
\begin{align}
    \nabla^\perp G=i\overline{g}\nabla g.
\end{align}
Set
\begin{align}
    u=\begin{pmatrix}
        \overline{g}\\
        iG
    \end{pmatrix}.
\end{align}
Then $\operatorname{div}(g\nabla u)=0$, i.e. $u$ is a smooth conformal Hamiltonian stationary Lagrangian map with Lagrangian angle $\overline{g}$.
Observe that $u$ is a smooth embedding of a disc.
Consider the vector field
\begin{align}
    X:=\overline{g}J\partial_\tau u+G I\partial_\tau u
\end{align}
defined along $u(\partial D^2)$. Notice that $\partial_\tau u\cdot X=0$. Then there exists an open domain $\Omega$ with smooth boundary, such that $u(\partial D^2)\subset \partial\Omega$ and at any point $y$ of $u(\partial D^2)$, $N(y)\wedge X(y)=0$ (where $N$ denotes the outer normal vector of $\Omega$).\\
Let $\omega$ be a smooth, relatively open subset of $\overline{D^2}$. Let $f$ be a smooth function on $\mathbb{C}^2$ supported away from $u(\partial \omega\cap D^2)$ and such that $I\nabla f(x)\cdot N(x)=0$ for any $x$  in a neighbourhood of $u(\overline{\omega}\cap\partial D^2)$ in $\partial \Omega$. We compute as in \eqref{eq: computation-HSlp}
\begin{align}
    \int_\omega \langle du; d(I(\nabla f)\circ u)\rangle=&\int_{\partial\omega}\langle\partial_\nu u, I(\nabla f)\circ u\rangle-\int_{\omega}i\overline{g}dg\cdot d(f\circ u)\\
    \nonumber
    =&\int_{\overline{\omega}\cap\partial D^2} \langle\partial_\nu u, I(\nabla f)\circ u\rangle+\int_{\overline{\omega}\cap\partial D^2}\partial_\tau G\, f\circ u\\
    \nonumber
    =&\int_{\overline{\omega} \cap\partial D^2}\langle \overline{g}J\partial_\tau u+GI\partial_\tau u, I(\nabla f)\circ u\rangle\\
    \nonumber
    =&\int_{\overline{\omega} \cap\partial D^2}\langle X\circ u, I(\nabla f)\circ u\rangle=0,
\end{align}
where the last equality follows from the fact that $N\wedge X=0$ and $I(\nabla f)\cdot N=0$ on $u(\overline{\omega}\cap\partial D^2)$. We conclude that $u$ is a conformal, $\Omega$-free boundary Hamiltonian stationary Lagrangian map with the localization property, but its Lagrangian angle $\overline{g}$ is not constant.
\end{proof}

\end{document}